\documentclass{amsart}
\title{Not every pseudoalgebra is equivalent to a strict one}
\author{Michael A.\ Shulman}
\address{Department of Mathematics\\University of California San Diego\\
  9500 Gilman Dr. \#0112\\San Diego, CA 92093-0112, U.S.A.}
\thanks{The author was supported by a National Science Foundation
  postdoctoral fellowship during the writing of this paper.}
\email{mshulman@ucsd.edu}
\date{\today}

\usepackage{amssymb,amsmath,stmaryrd,mathrsfs}
\usepackage{amsthm}
\usepackage[all,2cell]{xy}
\UseAllTwocells
\usepackage[neveradjust]{paralist}
\usepackage{hyperref}
\usepackage{mathtools}          
\usepackage{xspace}
\makeatletter
\let\ea\expandafter
\def\mdef#1#2{\ea\ea\ea\gdef\ea\ea\noexpand#1\ea{\ea\ensuremath\ea{#2}\xspace}}
\def\alwaysmath#1{\ea\ea\ea\global\ea\ea\ea\let\ea\ea\csname your@#1\endcsname\csname #1\endcsname
  \ea\def\csname #1\endcsname{\ensuremath{\csname your@#1\endcsname}}}
\newcount\foreachcount
\def\foreachLetter#1#2#3{\foreachcount=#1
  \ea\loop\ea\ea\ea#3\@Alph\foreachcount
  \advance\foreachcount by 1
  \ifnum\foreachcount<#2\repeat}
\def\definescr#1{\ea\gdef\csname s#1\endcsname{\ensuremath{\mathscr{#1}}\xspace}}
\foreachLetter{1}{27}{\definescr}
\def\definecal#1{\ea\gdef\csname c#1\endcsname{\ensuremath{\mathcal{#1}}\xspace}}
\foreachLetter{1}{27}{\definecal}
\def\definebold#1{\ea\gdef\csname b#1\endcsname{\ensuremath{\mathbf{#1}}\xspace}}
\foreachLetter{1}{27}{\definebold}
\def\autofmt@b#1\autofmt@end{\mathbf{#1}}
\def\autofmt@c#1#2\autofmt@end{\mathcal{#1}\mathit{#2}}
\def\autofmt@s#1#2\autofmt@end{\mathscr{#1}\mathit{#2}}
\def\auto@drop#1{}
\def\autodef#1{\ea\ea\ea\@autodef\ea\ea\ea#1\ea\auto@drop\string#1\autodef@end}
\def\@autodef#1#2#3\autodef@end{%
  \ea\def\ea#1\ea{\ea\ensuremath\ea{\csname autofmt@#2\endcsname#3\autofmt@end}\xspace}}
\def\autodefs@end{blarg!}
\def\autodefs#1{\@autodefs#1\autodefs@end}
\def\@autodefs#1{\ifx#1\autodefs@end%
  \def\autodefs@next{}%
  \else%
  \def\autodefs@next{\autodef#1\@autodefs}%
  \fi\autodefs@next}
\def\defthm#1#2{%
  \newtheorem{#1}{#2}[section]%
  \expandafter\def\csname #1autorefname\endcsname{#2}%
  \expandafter\let\csname c@#1\endcsname\c@thm}
\newtheorem{thm}{Theorem}[section]

\defthm{cor}{Corollary}
\defthm{prop}{Proposition}
\defthm{lem}{Lemma}
\theoremstyle{definition}
\defthm{defn}{Definition}
\theoremstyle{remark}
\defthm{rmk}{Remark}
\defthm{eg}{Example}

\let\c@equation\c@thm
\numberwithin{equation}{section}
\@ifpackageloaded{mathtools}{\mathtoolsset{showonlyrefs,showmanualtags}}{}
\alwaysmath{alpha}
\let\al\alpha
\alwaysmath{lambda}

\alwaysmath{Gamma}
\let\Gm\Gamma
\let\iso\cong
\alwaysmath{otimes}
\mdef{\ten}{\otimes}
\newcommand{\too}[1][]{\ensuremath{\overset{#1}{\longrightarrow}}}
\let\toto\rightrightarrows
\makeatother

\autodefs{\cCat\cCAT\cBicat\cPs\cAlg\cMnd\cTricat\bSet\cUBicat\bSet\nob\cPsmon\cMon}
\def\cat#1{\ensuremath{#1\text{-}\cCat\xspace}}

\def\CAT#1{\ensuremath{#1\text{-}\cCAT\xspace}}
\def\Alg#1{\ensuremath{#1\text{-}\cAlg}}
\def\PsAlg#1{\ensuremath{\cPs\text{-}#1\text{-}\cAlg}}
\def\pscat#1{\ensuremath{\cPs\text{-}#1\text{-}\cCat}}
\mdef\vcatidc{\CAT{\cV}_{\otimes\Sigma,c}}
\mdef\twocatidc{\CAT{2}_{\times\Sigma,c}}
\mdef\cGRAY{\mathfrak{G}\mathbf{ray}}
\mdef\grayidc{\cGRAY_{\times\Sigma,c}}
\mdef\cMndco{\cMnd_{c}}
\mdef\ctwoCat{2\text{-}\cCat}
\hyphenation{pseudo-monad}
\hyphenation{pseudo-monads}
\hyphenation{pseudo-algebra}
\hyphenation{pseudo-algebras}
\alwaysmath{times}
\begin{document}
\maketitle

\begin{abstract}
  We describe a finitary 2-monad on a locally finitely presentable 2-category for which not every pseudoalgebra is equivalent to a strict one.
  This shows that having rank is not a sufficient condition on a 2-monad for every pseudoalgebra to be strictifiable.
  Our counterexample comes from higher category theory: the strict algebras are strict 3-categories, and the pseudoalgebras are a type of semi-strict 3-category lying in between Gray-categories and tricategories.
  Thus, the result follows from the fact that not every Gray-category is equivalent to a strict 3-category, connecting 2-categorical and higher-categorical coherence theory.
  In particular, any nontrivially braided monoidal category gives an example of a pseudoalgebra that is not equivalent to a strict one.
\end{abstract}

\section{Introduction}
\label{sec:intro}

This paper is concerned with theorems of the form ``every weak
structure of some sort is equivalent to a stricter one.''  Theorems of
this sort are sometimes called ``coherence theorems,'' although that
descriptor also often refers to a distinct sort of theorem (one which
explicitly describes the equations that hold in a free structure).
For example, the prototype ``strictification'' theorem is Mac Lane's
result that every monoidal category is equivalent to a strict monoidal
category.  It is natural to look for general contexts in which to
state and prove such theorems, rather than dealing with each case
separately, and one such context is the theory of \emph{2-monads}
initiated in~\cite{bkp:2dmonads} (see also~\cite[\S4]{steve:companion}
for a good introduction).

For a 2-monad $T$ we can construct both the 2-category $\Alg{T}_s$ of
strict algebras and strict morphisms, which satisfy the algebra laws
strictly, and the 2-category $\PsAlg{T}$ of \emph{pseudo} $T$-algebras
and \emph{pseudo} $T$-morphisms, which satisfy the corresponding laws
only up to specified coherent isomorphism.  A natural candidate for a
``general coherence theorem'' would therefore have the form ``for all
2-monads $T$ with some property, every pseudo $T$-algebra is
equivalent to a strict one.''  In particular, there is a 2-monad $S$
on \cCat for which $S$-algebras are strict monoidal categories, while
pseudo $S$-algebras are, essentially, non-strict monoidal categories;
thus Mac Lane's coherence theorem can be regarded as having this form.

\begin{rmk}\label{rmk:bias}
  There is a subtlety here, however: pseudo $S$-algebras are actually
  ``unbiased'' monoidal categories, which have a basic $n$-ary tensor
  product for all $n\ge 0$, rather than merely binary and nullary
  operations as in the usual presentation.  The 2-category $\PsAlg{S}$
  turns out to be equivalent to the usual 2-category of ``biased''
  monoidal categories and strong monoidal functors, but it is
  \emph{this} equivalence where the hard work in Mac Lane's theorem
  really lies.  The fact that every pseudo $S$-algebra is equivalent
  to a strict one is much easier, by comparison, and in fact follows
  from the general coherence theorems mentioned below.

  This sort of situation is quite common in the study of coherence.
  One possible reaction is to say that pseudoalgebras are not really
  the objects of interest, but are of mainly technical usefulness.
  Another point of view is that pseudoalgebras and other ``unbiased''
  structures are really the fundamental objects, with the more usual
  sort of ``biased'' definitions only being correct insofar as they
  are a more economical presentation of an unbiased one.  But the
  question of strictifying pseudoalgebras, which we address here, is
  of interest in either case.
\end{rmk}

One of the first and most general strictification theorems for pseudoalgebras was proven in~\cite{power:coherence}, under the hypothesis that the 2-monad in question preserves a suitable factorization system.
A slight refinement of this, along with some other sufficient conditions regarding the preservation of certain 2-categorical colimits, can be found in~\cite{lack:codescent-coh}.
There are 2-monads for which not every pseudoalgebra is equivalent to a strict one, such as that
in~\cite[Example 3.1]{lack:codescent-coh}, but until now all known
such examples have been fairly contrived and lived on poorly behaved
2-categories, suggesting a conjecture that the theorem might always
hold in well-behaved cases.  The purpose of this paper is to describe
a very natural and otherwise well-behaved 2-monad on a well-behaved
2-category for which the ``coherence theorem'' fails.  In particular,
the 2-category in question is locally finitely presentable, and the
2-monad is finitary (preserves filtered colimits).

Of course, there are many known situations in which not every weak
structure is equivalent to a strict one.  For instance, not every
symmetric monoidal category is equivalent to a strictly-symmetric
strict monoidal category.  However, this is not an instance of the
notion of pseudoalgebra over a 2-monad.  There is a 2-monad whose
strict algebras are strictly-symmetric strict monoidal categories, but
its pseudoalgebras cannot be identified with non-strict symmetric
monoidal categories.  Instead, non-strict symmetric monoidal
categories are the pseudoalgebras for a 2-monad whose strict algebras
are non-strictly-symmetric strict monoidal categories, and for this
2-monad the coherence theorem does hold.

For our counterexample, we exploit a related situation, namely the
fact that not every tricategory is equivalent to a strict 3-category.
Since this situation is ``higher-dimensional,'' it may at first not
seem to fall within the realm of 2-monad theory.
However, it has emerged recently (see for instance~\cite{lack:icons,gg:ldstr-tricat}) that by using special sorts of higher transformations, one can construct ``low-dimensional categories of higher-dimensional categories.''
In this spirit, we will show that there is a 2-monad $T_{\cCat}$ on the 2-category of ``\cCat-enriched 2-graphs,''
whose strict algebras are strict 3-categories, and whose
pseudoalgebras are a type of ``semi-strict'' 3-category.  We call
these \emph{iconic tricategories}, since they can be identified with
tricategories whose associativity and unit constraints are
\emph{icons} in the sense of~\cite{lack:icons}, i.e.\ have identity
1-cell components.  The main theorem follows once we observe that all
Gray-categories are iconic, so that every tricategory is equivalent to an iconic one; thus not all iconic tricategories can be
equivalent to strict 3-categories.  We can also give a more direct
proof by restricting to doubly-degenerate objects, appealing instead
to the fact that not every braided monoidal category is equivalent to
a strictly symmetric one.

The 2-monad $T_{\cCat}$ can be described very explicitly, but identifying its pseudoalgebras is easier if we also derive it from some abstract machinery.
As observed in~\cite{leinster:higher-opds,cheng:cmp-opd-ncats,bcw:algop-enr-ii}, we can construct monads whose algebras are enriched $n$-categories by iteratively splicing together monads whose algebras are enriched 1-categories, using distributive laws.
By identifying strict 3-categories with \cCat-enriched 2-categories, we can obtain the 2-monad $T_{\cCat}$ by one application of this procedure, as long as we carry a \cCat-enrichment through the construction so as to obtain a 2-monad instead of an ordinary one.
Thus, a large part of the paper is spent setting up this machinery in the enriched setting.

In \S\ref{sec:2monads} we recall the basic notions of 2-monad theory and the general coherence theorems of~\cite{power:3dmonads,lack:codescent-coh}.
Then in \S\ref{sec:graphs} we describe the general construction of a \cV-monad $\Gm_{\cW}$ whose algebras are \cW-enriched categories, for any bicomplete cartesian closed category \cV and any monoidal \cV-category \cW.
(Our primary interest is in the case $\cV=\cCat$, but the greater generality clarifies the exposition.)
In \S\ref{sec:pscat} we remark on the application of the coherence theorems to $\Gm_{\cW}$ when $\cV=\cCat$, yielding the strictification theorem for ``(enriched) unbiased bicategories.''
Then in \S\ref{sec:iterated} we describe the iteration procedure as in the references above, but carrying through an ambient enrichment over any \cV as in \S\ref{sec:graphs}, thereby yielding a \cV-monad $T_\cW$ whose algebras are \cW-enriched 2-categories.
Finally, in \S\ref{sec:enrich-bicat} we take $\cV=\cW=\cCat$, identify pseudo $T_{\cCat}$-algebras with iconic tricategories, and conclude that not every pseudo $T_{\cCat}$-algebra is equivalent to a strict one.

I would like to thank Steve Lack for a careful reading of early drafts
of this paper and several very helpful suggestions.

\section{Strictification of pseudoalgebras}
\label{sec:2monads}

We begin by briefly reviewing the basic notions of 2-monad theory and the general coherence theorem.
By a \textbf{2-monad} we will always mean a \emph{strict} 2-monad; that is, a \cCat-enriched monad.
From general enriched category theory (see for instance~\cite{kelly:enriched}), any such 2-monad $(T,\mu,\eta)$ has a 2-category of algebras, which is denoted by $\Alg{T}_s$.
Its objects are pairs $(A,a)$, where $a\colon TA\to A$ satisfies $a \circ \eta = 1$ and $a \circ Ta = a\circ \mu$ exactly, and a morphism $(A,a)\to (B,b)$ is a morphism $f\colon A\to B$ such that $f \circ a = b\circ Tf$ exactly.
We call these \emph{strict $T$-algebras} and \emph{strict $T$-morphisms}.

We also have the 2-category $\PsAlg{T}$, whose objects and morphisms are \emph{pseudo $T$-algebras} and \emph{pseudo $T$-morphisms}, respectively.
A pseudo $T$-algebra consists of $A$ and $a\colon TA\to A$ together with isomorphisms $a \circ \eta \cong 1$ and $a \circ Ta \cong a\circ \mu$, satisfying appropriate coherence laws.
Similarly, a pseudo $T$-morphism is $f\colon A\to B$ together with an isomorphism $b\circ Tf \cong f \circ a$ satisfying appropriate axioms.
(If the isomorphism is replaced by a not-necessarily-invertible morphism $b\circ Tf \to f \circ a$ or $f \circ a \to b\circ Tf$, we call it a \emph{lax} or \emph{colax} $T$-morphism, respectively.)
There is an obvious inclusion $\Alg{T}_s \hookrightarrow \PsAlg{T}$, and the question of strictification is whether it is essentially surjective (up to equivalence).

In~\cite{power:coherence}, Power proved a general strictification theorem in the following situation.
We suppose that the base 2-category \cK\ has a factorization system $(\sE,\sM)$ which is \emph{enhanced}, meaning that given any isomorphism $\alpha\colon t e \xrightarrow{\cong} m s$ with $e\in \sE$ and $m\in \sM$, there exists a unique pair $(r,\beta)$ with $r e=s$, $\beta\colon t \xrightarrow{\cong} m r$, and $\beta e = \alpha$.
The prototypical example is (bijective on objects, fully faithful) on \cCat.
We suppose furthermore that if $j\in\sM$ and $jk\cong 1$, then $kj\cong 1$.
(This is the case whenever all morphisms in \sM\ are representably fully faithful, i.e.\ $\cK(X,A) \xrightarrow{\cK(X,j)} \cK(X,B)$ is fully faithful for all $j\colon A\to B$ in \sM.)
Power showed (essentially) that under these hypotheses, if $T$ is a 2-monad on \cK\ which preserves \sE-morphisms, then every pseudo $T$-algebra is equivalent to a strict one.

In~\cite{lack:codescent-coh}, Lack observed that Power's hypotheses actually imply that $\Alg{T}_s \hookrightarrow \PsAlg{T}$ has a left 2-adjoint, and the components of the adjunction unit are equivalences.
Thus, not only is every pseudo $T$-algebra equivalent to a strict one, but in a certain canonical universally characterized way.
He also noted that such a left adjoint exists as soon as $\Alg{T}_s$ has a certain type of \cCat-enriched colimit called a \emph{reflexive codescent object}.
Two natural hypotheses under which $\Alg{T}_s$ has reflexive codescent objects are (1) \cK\ has reflexive codescent objects and $T$ preserves them, or (2) \cK\ is locally presentable and $T$ is accessible (has a rank).
Lack proved that hypothesis (1) also implies the strictification theorem (i.e.\ the components of the unit are equivalences).
The example we will discuss shows that hypothesis (2) does not.

\section{Enriched graphs and categories}
\label{sec:graphs}

We now describe a monad whose algebras are categories enriched over some monoidal category \cW.
This is well-known; the only slight novelty is the observation that when \cW\ is a monoidal 2-category, the monad is a 2-monad.
There is not much special about \cCat-enrichment in this observation: when \cW\ is a monoidal \cV-category, for any complete and cocomplete
cartesian closed category \cV, the resulting monad is a \cV-monad.
(We do need \cV\ to be \emph{cartesian} monoidal, however.)
In fact, replacing \cCat by \cV can even make things clearer, since it avoids confusion between the categories we are defining a monad for and the categories we are enriching over.

Thus, for this section, let \cV be complete, cocomplete, and cartesian closed; in the next section we will specialize to $\cV=\cCat$.
For any \cV-category \cW, a \textbf{\cW-graph} consists of a set $A_0$
along with, for every $x,y\in A_0$, an object $A(x,y)\in\cW$.  We
define a \cV-category $\cG(\cW)$ of \cW-graphs, with hom-objects
\[\cG(\cW)(A,B) = \sum_{f_0\colon A_0\to B_0} \;\prod_{x,y\in A_0}
\cW\Big(A(x,y),\;B(f_0(x),f_0(y))\Big)
\]
If the terminal object $1$ of \cV is indecomposable (i.e.\ $\cV(1,-)$
preserves sums), then a morphism $f\colon A\to B$ in the underlying
ordinary category of $\cG(\cW)$ consists of a function $f_0\colon
A_0\to B_0$ together with, for every $x,y\in A_0$, a morphism
$A(x,y)\to B(f_0(x),f_0(y))$ in \cW.

Any \cV-functor $F\colon \cW\to\cW'$ induces a \cV-functor
$\cG(F)\colon \cG(\cW)\to\cG(\cW')$, which leaves the sets $A_0$ unchanged and applies $F$
on hom-objects.  Likewise, any \cV-transformation $\al\colon F\to G$
induces $\cG(\al)\colon \cG(F)\to\cG(G)$, defined by the map
\[1 \too \sum_{f_0\colon A_0\to A_0} \;\prod_{x,y\in A_0}
\cW'\Big(F(A(x,y)),\;G(A(x,y))\Big)
\]
which is determined by components of $\al$ mapping into the summand
$f_0 = \mathrm{id}$.  Thus, \cG defines an endo-2-functor of the
2-category \CAT{\cV} of \cV-categories.  In the case $\cV=\bSet$, this
is the functor of the same name from~\cite[\S2.1]{bcw:algop-enr-ii}.

Now suppose that \cW is a monoidal \cV-category, so that we can also
consider \cW-enriched categories.  We can then define a \cV-category
\cat{\cW} of small \cW-categories, whose hom-object $\cat{\cW}(A,B)$
is an equalizer of the following form:
\begin{multline*}
  \vcenter{\xymatrix{ 
\displaystyle \sum_{f_0\colon A_0\to B_0}^{\phantom{A}} \prod_{x,y\in A_0}
\cW\Big(A(x,y),\; B(f_0(x),f_0(y))\Big)
\ar@<1mm>[r]\ar@<-1mm>[r] &}}\\
\sum_{f_0\colon A_0\to B_0}^{\phantom{A}} \prod_{x,y,z\in A_0}
\cW\Big(A(y,z)\otimes A(x,y),\;B(f_0(x),f_0(z))\Big)
\end{multline*}

The assumption that \cV is cartesian, rather than merely symmetric
monoidal, is essential in defining one of these two morphisms.  If $1$
is indecomposable in \cV, then a morphism $f\colon A\to B$ in the
underlying ordinary category of $\cat{\cW}$ is exactly a \cW-enriched
functor in the usual sense.

\begin{eg}
  Since \cV is a monoidal \cV-category, we have in particular a \cV-category
  \cat{\cV} of small \cV-categories.  It is well-known that \cat{\cV}
  is also closed symmetric monoidal and hence enriched over itself,
  but it is not as commonly observed that it can be enriched over \cV
  as well.
  As we will see in the next section, however, the enrichment is not necessarily what one would expect.
\end{eg}


There is an evident forgetful \cV-functor $\cU_\cW\colon
\cat{\cW}\to\cG(\cW)$.  If we suppose in addition that \cW is
\textbf{\ten-distributive}, i.e.\ it has small sums which are
preserved on both sides by $\otimes$, then $\cU_\cW$ has a left
adjoint and is monadic.  Its left adjoint $\cF_\cW$ acts as the identity on
$A_0$, with
\[ \cF_\cW(A)(x,y) = \sum_{z_1,\dots,z_n} A(z_n,y) \otimes\dots \otimes A(x,z_1).
\]
(Again, we need \cV to be cartesian to make $\cF_\cW$ into a
\cV-functor.)  The sum always includes $n=0$, in which case the term
is $A(x,y)$, and if $x=y$ it also includes ``$n=-1$'' whose
corresponding term is the unit object of \cW.  Preservation of sums by
tensor products in \cW enables us to make this into a \cW-category,
and its universal property is easy to verify.
Following~\cite[\S4]{bcw:algop-enr-ii}, we write $\Gm_\cW$ for the
associated \cV-monad on $\cG(\cW)$, whose algebras are \cW-enriched
categories.

\section{Pseudo enriched categories}
\label{sec:pscat}

We now specialize to the case $\cV=\cCat$.  Thus, for any 2-category
\cW, we have a 2-category $\cG(\cW)$ of \cW-graphs.  Its objects and
morphisms are what one would expect, while a 2-cell $\al\colon f\to g$
between morphisms $f,g\colon A\to B$ of \cW-graphs consists of
\begin{enumerate}
\item The assertion that $f_0=g_0$, and
\item For each $x,y\in A_0$, a 2-cell $A(x,y) \!\xymatrix@C=3pc{
    \rtwocell^f_g{\al} & }\! B(f_0x, f_0y)$ in \cW.
\end{enumerate}
If \cW is moreover a \ten-distributive monoidal 2-category, then we
have a 2-monad $\Gm_\cW$ on $\cG(\cW)$ such that strict
$\Gm_\cW$-algebras are small \cW-enriched categories, and strict
$\Gm_\cW$-morphisms are \cW-enriched functors.  A
$\Gm_\cW$-transformation $\al\colon f\to g$ between such functors
consists of a 2-cell of \cW-graphs, as above, such that
\begin{enumerate}\setcounter{enumi}{2}
\item For each $x,y,z\in A$, we have
  \begin{multline*}
    \vcenter{\xymatrix@C=6pc{A(y,z)\otimes A(x,y)
        \rtwocell^{f\otimes f}_{g\otimes g}{\al^{y,z}\otimes \al^{x,y}\hspace{2cm}}\ar[d]
        &
        B(f_0 y,f_0 z)\otimes  B(f_0 x, f_0 y) \ar[d]\\
        A(x,z) \ar[r] &  B(f_0x, f_0 z)}} =\\
    \vcenter{\xymatrix@C=3pc{A(y,z)\otimes A(x,y)
        \ar[r]^-{f\otimes f}
        \ar[d] 
        &
        B(f_0 y,f_0 z)\otimes  B(f_0 x, f_0 y) \ar[d]\\
        A(x,z) \rtwocell^f_g{\al^{x,z}\hspace{1.3cm}} &  B(f_0x, f_0 z)}}
  \end{multline*}
\item For each $x\in A$, we have
  \[\vcenter{\xymatrix@C=3pc@R=3pc{
      1 \ar[r] \ar[dr]
      &
      A(x,x) \dtwocell<5>^f_g{\al^{x,x}} \\
      &  B(f_0 x, f_0 x)
    }}
  \quad=\quad
  \vcenter{\xymatrix{
      1 \ar[r] \ar[dr]
      &
      A(x,x) \ar[d]^f \\
      &  B(f_0 x, f_0 x)
    }}
  \]
\end{enumerate}
We call such a 2-cell a \textbf{\cW-icon}.  In the case $\cW=\cCat$,
the 2-monad $\Gm_{\cCat}$ is the same one considered
in~\cite[\S6.2]{lack:icons} and in~\cite{lp:2nerves}, and a \cCat-icon
is the same as an icon in the sense defined there.
The word ``icon'' is an acronym for ``Identity Component Oplax Natural transformation,'' since icons can be identified with oplax transformations whose 1-morphism components are identities.

\begin{rmk}
The 2-category $\Alg{\Gm_{\cCat}}$ of 2-categories, 2-functors, and icons is the prototypical ``low-dimensional category of higher-dimensional categories.''
Normally, of course, we regard 2-categories as forming a (strict or weak) 3-category, with pseudonatural transformations and modifications as the 2- and 3-morphisms.
The important insight is that by restricting the transformations to be have identity components, we can allow them to be otherwise oplax (not just pseudo), and we can moreover discard the modifications and obtain a well-behaved 2-category.
\end{rmk}

Now, returning to the case of general \cW, we can also consider
\emph{pseudo} $\Gm_\cW$-algebras, which we call \textbf{unbiased
  pseudo \cW-categories}.  Inspecting the monad $\Gm_{\cCat}$, we see that a pseudo $\Gm_\cW$-algebra has a set of objects $A_0$,
hom-objects $A(x,y)\in\cW$, and basic $n$-ary composition operations
\[A(x_{n-1},x_n) \otimes\cdots\otimes A(x_0,x_1) \too A(x_0,x_n)
\]
for all $n\ge 0$, along with unbiased associativity isomorphisms
satisfying coherence laws.  In particular, when $\cW=\cCat$ we
obtain \emph{unbiased bicategories}.  We have corresponding notions of
\textbf{\cW-pseudofunctors} and \textbf{icons} between pseudo
\cW-categories, forming the 2-category $\PsAlg{\Gm_\cW}$.  The
definition of \cW-icon for pseudo \cW-categories is just like that
above for strict ones, except that appropriate isomorphisms must be
inserted in previously commutative squares and triangles.

One can also define a notion of \emph{biased pseudo \cW-category}, by
simply writing out the usual definition of bicategory and replacing
all categories, functors, and transformations by objects, morphisms,
and 2-cells in \cW (and the cartesian product of categories by the
tensor product in \cW).  Similarly, one can define \cW-pseudofunctors
and \cW-icons between these, forming a 2-category \pscat{\cW}.  We
then have:

\begin{lem}\label{thm:bias}
  The 2-categories $\PsAlg{\Gm_\cW}$ and $\pscat{\cW}$ are
  2-equivalent.
\end{lem}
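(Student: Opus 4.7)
The plan is to construct mutually pseudo-inverse 2-functors
\[ \Phi \colon \pscat{\cW} \rightleftarrows \PsAlg{\Gm_\cW} \colon \Psi. \]
The 2-functor $\Psi$ is the evident restriction: from an unbiased pseudo \cW-category $A$ with its $n$-ary composites $m_n$ and unbiased associativity constraints, one reads off $m_2$ as the biased binary composition, $m_0$ as the identity, and the biased associator and unitors as the components of the unbiased constraints at the three configurations corresponding to $(fg)h$, $m_0 \otimes f$, and $f \otimes m_0$. The biased pentagon and triangle axioms are then direct instances of the unbiased coherence axioms, and the \cW-pseudofunctor and \cW-icon data restrict similarly.

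The 2-functor $\Phi$ is the substantive direction. Given a biased pseudo \cW-category $A$, I keep the underlying \cW-graph unchanged, take the nullary composite to be the unit $I \to A(x,x)$, take the binary composite to be the given one, and for $n \ge 3$ take the $n$-ary composite to be a chosen iterate of binary compositions (for definiteness, fully left-bracketed). For each tree $t$ of binary and nullary operators with $n$ inputs, a finite pasting of the biased associator, unitors, and their inverses produces an invertible 2-cell comparing the composite determined by $t$ to the chosen canonical $n$-ary composite, and these 2-cells are declared to be the unbiased associativity constraints of $\Phi(A)$. Well-definedness (independence of the particular pasting) is exactly Mac Lane's coherence theorem for bicategories, applied internally to \cW by reducing to the free biased pseudo \cW-category on the relevant \cW-graph; the unbiased coherence axioms for $\Phi(A)$ then follow because both sides of each axiom become parallel coherence 2-cells in that free structure.

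The extension of $\Phi$ to 1-cells and 2-cells is analogous. A biased \cW-pseudofunctor assembles into an unbiased one by pasting its binary and nullary coherence 2-cells into structure maps $F(m_n) \to m_n(F,\ldots,F)$, with well-definedness and axioms again reduced to biased coherence; and \cW-icons transport directly since their 1-cell components are identities. Then $\Psi \Phi = 1$ on the nose, since $\Phi$ preserves the original binary and nullary operations together with the biased associator and unitors. For $\Phi\Psi \simeq 1$ the comparison 2-cell at an unbiased pseudo \cW-category $A$ is the unbiased associativity constraint of $A$ itself, identifying each $m_n$ with its left-bracketed iterate of $m_2$'s and $m_0$'s; its invertibility, naturality, and compatibility with the pseudo $\Gm_\cW$-algebra structure are direct readings of the unbiased axioms for $A$.

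The main obstacle is the internal coherence theorem underlying $\Phi$: that the biased pentagon and triangle axioms alone force all iterated pastings of biased constraints to agree. As flagged in Remark~\ref{rmk:bias}, this is precisely the substantive content of Mac Lane's theorem, now done in the \cW-enriched setting. Once it is in place, the remaining verifications of 2-functoriality and of the equivalence structure are routine diagram chases.
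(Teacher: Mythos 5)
Your proposal is correct and follows essentially the same route as the paper, whose proof simply defers to the standard biased/unbiased comparison (Leinster's 3.2.4 for monoidal categories and bicategories): restriction in one direction, and in the other a choice of iterated binary composites with constraints supplied and shown coherent via the coherence theorem for the free biased structure, all carried out \cW-enriched. Your write-up is just an honest expansion of that cited argument, including the acknowledgement (as in the paper's Remark on bias) that the substantive content is the internal coherence theorem.
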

\begin{proof}
  This is basically identical to the corresponding result for bicategories
  or monoidal categories, see
  e.g.~\cite[3.2.4]{leinster:higher-opds}.  (However, recall \autoref{rmk:bias}.)
\end{proof}

Accordingly, we will write \pscat{\cW} for this 2-category and call
its objects simply \emph{pseudo \cW-categories}.  (Note, though, that
the lemma would be false if we used \emph{strict} functors in defining
these 2-categories instead of pseudo ones.)  Of course,
$\pscat{\cCat}\simeq \cBicat$ is the 2-category of bicategories,
pseudofunctors, and icons.

Note that a pseudo \cW-category with one object is precisely a
pseudomonoid in \cW, just as a bicategory with one object is a
monoidal category.  Furthermore, in this case \cW-pseudofunctors
reduce to pseudomorphisms of pseudomonoids (such as strong monoidal functors), and \cW-icons to
pseudomonoid transformations (such as monoidal transformations).  (As observed in~\cite{lack:icons}, this
is one of the advantages of icons: other kinds of transformation
between one-object bicategories do not correspond so closely to
monoidal transformations.)  We thus record:

\begin{lem}\label{thm:psmon}
  The 2-category $\cPsmon(\cW)$ of pseudomonoids in \cW embeds
  2-fully-faithfully in \pscat{\cW} as the pseudo \cW-categories with
  one object.
\end{lem}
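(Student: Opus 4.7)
The plan is to directly unpack both sides of the claimed 2-fully-faithful embedding at one-object structures and observe a term-by-term correspondence. By \autoref{thm:bias} I may freely pass between biased and unbiased pseudo \cW-categories, which lets me compare against the familiar biased definition of a pseudomonoid in a monoidal 2-category.

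First I will describe the embedding on objects. Given a pseudomonoid $(A,m,i,\alpha,\lambda,\rho)$ in \cW, I produce a biased pseudo \cW-category with a single object $*$, hom-object $A(*,*) = A$, binary composition $m\colon A\ten A\to A$, nullary composition (unit) $i\colon I\to A$, and associator and unitor 2-cells $\alpha,\lambda,\rho$. The pentagon and triangle axioms of a biased pseudo \cW-category then literally coincide with the pseudomonoid axioms. Running the construction backwards shows that every biased pseudo \cW-category with a one-point object set arises in this way, so I get a bijection on objects.

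Next I will unpack the 1-cells and 2-cells. A \cW-pseudofunctor between one-object pseudo \cW-categories has forced object assignment, and its remaining data consists of a 1-cell $A\to B$ in \cW together with invertible 2-cells expressing compatibility with composition and with the unit, subject to the usual coherence axioms; these are exactly the data and axioms for a morphism of pseudomonoids (a strong monoidal functor when $\cW=\cCat$). Similarly, a \cW-icon between such pseudofunctors has its identity-on-objects condition $F_0=G_0$ trivially satisfied, and so reduces to a single 2-cell $F\Rightarrow G$ in \cW; axioms~(iii) and~(iv) from the definition of \cW-icon specialize exactly to the two compatibility axioms defining a pseudomonoid transformation. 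Combined with the object-level bijection, this yields the claimed 2-fully-faithful embedding.

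The main thing to watch is the direction of laxness. \cW-icons specialize to identity-component \emph{oplax} transformations, so I need to check that the composition-compatibility axiom, read on a one-object structure, lines up with the standard pseudomonoid-transformation axiom rather than its opposite. This is immediate once the relevant square is drawn, since the constraint 2-cells involved are invertible. No deeper obstacle arises: the lemma is simply the \cW-enriched upgrade of the classical identification of one-object bicategories with monoidal categories.
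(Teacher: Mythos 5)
Your proposal is correct and matches the paper's reasoning: the lemma is recorded there without a formal proof, justified only by the preceding observation that a one-object pseudo \cW-category is precisely a pseudomonoid, with \cW-pseudofunctors reducing to pseudomorphisms of pseudomonoids and \cW-icons to pseudomonoid transformations. Your unpacking (including the passage through the biased description via the earlier lemma and the check on the direction of laxness) is just the expected elaboration of exactly that argument.
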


When \cW is symmetric, so that \cW-categories and \cW-pseudomonoids
have tensor products, then this embedding is also strong monoidal.  Of
course, strict \cW-enriched categories correspond to strict monoids.

Although our main theorem will be about an iterated version of $\Gm_\cW$, it is natural to ask whether $\Gm_\cW$ itself satisfies the strictification theorem.
One of the applications of the general coherence theorem given in~\cite{power:coherence} was to unbiased bicategories, but only with a fixed set of objects (i.e.\ working with a different 2-monad on a different 2-category for every set of objects).
The monad $\Gm_\cW$, as we have defined it, does not quite satisfy any of the hypotheses of the strictification theorems cited in \S\ref{sec:2monads}, but Steve Lack has observed that essentially the same proofs can nevertheless be applied as long as we carefully note that the hypotheses are used only in cases where they are valid.

For instance, suppose that \cW has an enhanced factorization system $(\sE,\sM)$ such that \sE is preserved by \ten on both sides and all \sM-maps are representably fully faithful.
(This includes the factorization system (bijective on objects, fully faithful) on \cCat.)
Then $\cG(\cW)$ has an enhanced factorization system $(\cG(\sE),\cG(\sM))$, where the $\cG(\sE)$-maps are bijective on objects and locally (i.e.\ hom-wise) in \sE, and the $\cG(\sM)$-maps are locally in \sM.
The 2-monad $\Gm_{\cW}$ preserves this class $\cG(\sE)$, but the final hypothesis, that $j\in \cG(\sM)$ and $j k\cong 1$ imply $k j\cong 1$, fails.
However, in the coherence theorem we only apply this hypothesis to the $\cG(\sM)$-half of the factorization of the structure map of a pseudo $\Gm_\cW$-algebra, and we claim that such a map is always representably fully faithful (which implies the desired conclusion).
For a map in $\cG(\cW)$ is representably fully faithful just when it is locally representably fully faithful and also injective on objects.
But we have assumed that all \sM-maps are representably fully faithful, and the structure map of a pseudo $\Gm_\cW$-algebra is always bijective on objects, so the map in question must also be so.

Similarly, $\Gm_\cW$ need not preserve all reflexive codescent objects, but if \cW has reflexive codescent objects preserved on either side by \ten, then $\cG(\cW)$ has, and $\Gm_\cW$ preserves, reflexive codescent objects of diagrams whose morphisms are all bijective on objects.
This follows from the ``4-by-4 lemma'' for reflexive codescent objects alluded to in~\cite[Prop.~4.3]{lack:codescent-coh} as a generalization of~\cite[2.1]{klw:refl-coinv}.
This is sufficient to prove the coherence theorem for $\Gm_\cW$, since its multiplication and unit are bijective on objects, as is the structure map of any pseudoalgebra (because it is a retraction of the unit, up to an invertible 2-cell in $\cG(\cW)$, and such a 2-cell requires its domain and codomain to act identically on objects).
In particular, this applies whenever \cW is closed monoidal and cocomplete (such as $\cW=\cCat$).
Thus, for any such \cW, the strictification theorem holds for pseudo \cW-categories.

\section{Monadic iterated enrichment}
\label{sec:iterated}

We now describe how to iterate the construction of the monad $\Gm_\cW$ to obtain a monad whose algebras are enriched 2-categories.
This procedure is described in~\cite[Appendix F]{leinster:higher-opds},~\cite{cheng:cmp-opd-ncats}, and most recently~\cite{bcw:algop-enr-ii}.
As in \S\ref{sec:graphs}, the only novelty is carrying through a \cCat-enrichment to obtain a 2-monad, and the only special property of \cCat required is that it is cartesian monoidal.
Thus, we revert to the situation of a complete and cocomplete cartesian closed category \cV and a monoidal \cV-category \cW, which in this section we additionally assume to be symmetric.

\begin{rmk}
  For our main theorem in \S\ref{sec:enrich-bicat} we will require
  only the case when $\cW$ is cartesian monoidal, but it is not much
  more work to consider the more general symmetric monoidal case.  The
  authors of~\cite{bcw:algop-enr-ii} work in the yet more general
  situation where \cW is only lax monoidal, but in such generality it
  seems that $\Gm_{\cW}$ need not be colax monoidal, as in
  \autoref{thm:g-lift} below.
\end{rmk}

As soon as \cW is symmetric, the \cV-category $\cat{\cW}$ is also symmetric monoidal.
Thus we can consider $(\cat{\cW})$-enriched categories, which it is natural to call \emph{\cW-enriched 2-categories}.
The theory of \S\ref{sec:graphs} shows that \cW-enriched 2-categories are monadic over $(\cat{\cW})$-graphs; our goal is to additionally exhibit them as monadic over \textbf{\cW-enriched 2-graphs}, i.e.\ $\cG(\cW)$-graphs.
The resulting monad will thus simultaneously build in both the ``horizontal'' and ``vertical'' composition operations of a 2-category.
The idea is to construct such a monad by combining two instances of the $\Gm$ monads, one for each composition operation.
The combination happens using the standard method of distributive laws, as in~\cite{beck:dl}.

We begin by observing that \cG is an endo-2-functor of $\CAT{\cV}$.
Moreover, when \cW is monoidal, so is $\cG(\cW)$: we set $(A\ten B)_0 = A_0\times B_0$
with
\[ (A\ten B)\big((x,y),(x',y')\big) = A(x,x') \ten B(y,y').
\]
Similarly, \cG also preserves monoidal functors of any type (strong, lax, colax), monoidal transformations, braidings, and symmetries.
Finally, if \cW is \ten-distributive, then $\cG(\cW)$ has small sums (take the disjoint union of object sets, with initial objects as hom-objects between them) and is \ten-distributive.
Thus, \cG defines an endo-2-functor of any 2-category of monoidal \cV-categories we might desire.

We now want to make the construction of the monad $\Gm_\cW$ functorial
as well.  This is done for both lax and colax monoidal functors
in~\cite{bcw:algop-enr-ii}, but we will focus on the colax case, which
can be iterated more successfully.  Let \vcatidc denote the
2-category of \ten-distributive symmetric monoidal \cV-categories,
colax symmetric monoidal functors that preserve small sums, and
monoidal transformations.  This will be the domain of our monad-valued functor; it is closely related to the 2-category
$\mathrm{OpDISTMULT}$ of~\cite[\S6.5]{bcw:algop-enr-ii}.  Note that
any functor or transformation between cartesian monoidal categories is
automatically colax symmetric monoidal.

The codomain of our monad-valued functor must be a 2-category of monads, and for purposes of iteration we would like it to consist of monads in \vcatidc itself.
Recall from~\cite{street:ftm} that a \emph{monad} in a 2-category \cK is an endo-1-morphism $t\colon A\to A$ together with 2-morphisms $tt\to t$ and $1\to t$ satisfying the usual laws.
Given two such monads $t\colon A\to A$ to $s\colon B\to B$, a \emph{colax monad morphism} between them (called a ``monad opfunctor'' in~\cite{street:ftm}) consists of a 1-morphism $f\colon A \to B$ together with a 2-cell $ft \to sf$ satisfying some axioms.
We write $\cMndco(\cK)$ for the 2-category of monads and colax monad morphisms in \cK.

\begin{lem}\label{thm:g-lift}
  The 2-functor \cG lifts to a 2-functor
  \[\cG\colon \vcatidc \too  \cMndco(\vcatidc),
  \]
  which sends \cW to $(\cG(\cW),\Gm_\cW)$.
\end{lem}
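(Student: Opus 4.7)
The plan is to verify, in order, (i) that $\Gm_\cW$ acquires a colax symmetric monoidal, sum-preserving structure making $(\cG(\cW), \Gm_\cW)$ a monad in \vcatidc; (ii) that a morphism $F\colon \cW \to \cW'$ in \vcatidc induces a colax monad morphism in \vcatidc; and (iii) that a 2-cell $\alpha\colon F \to G$ in \vcatidc induces a 2-cell of colax monad morphisms. Once these pieces are in place, 2-functoriality is immediate from the hom-wise character of the constructions.

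The heart of (i) is constructing the colax comparison
\[ \Gm_\cW(A \ten B) \too \Gm_\cW(A) \ten \Gm_\cW(B). \]
The source's hom-object at $\bigl((x,y),(x',y')\bigr)$ is the sum, over paths $(x,y) = (z_0,w_0), \dots, (z_k,w_k) = (x',y')$ in $A_0 \times B_0$, of the iterated tensor
\[ \bigl(A(z_{k-1},z_k) \ten B(w_{k-1},w_k)\bigr) \ten \cdots \ten \bigl(A(z_0,z_1) \ten B(w_0,w_1)\bigr); \]
the symmetry of $\cW$ interleaves this into
\[ \bigl(A(z_{k-1},z_k) \ten \cdots \ten A(z_0,z_1)\bigr) \ten \bigl(B(w_{k-1},w_k) \ten \cdots \ten B(w_0,w_1)\bigr), \]
which is exactly the summand of the target corresponding to the \emph{parallel} pair of paths $(z_0,\dots,z_k)$ in $A$ and $(w_0,\dots,w_k)$ in $B$. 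Assembled by \ten-distributivity, this defines the colax comparison as the inclusion of parallel path pairs among all path pairs. The unit comparison $\Gm_\cW(I_{\cG(\cW)}) \to I_{\cG(\cW)}$ is the codiagonal $\sum_{k\ge 0} I_\cW \to I_\cW$ on the single hom-object. Sum-preservation by $\Gm_\cW$ is immediate from its defining formula, and that $\eta$ (selecting the length-$1$ path) and $\mu$ (concatenating paths) are monoidal transformations follows because both operations respect the projection of a path in $A\ten B$ onto its pair of coordinate paths. The coherence axioms reduce, summand by summand, to the symmetric monoidal coherence of \cW.

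For (ii), $\cG(F)$ inherits a colax symmetric monoidal sum-preserving structure by applying $F$'s colax comparisons hom-wise, and the required 2-cell $\cG(F) \Gm_\cW \to \Gm_{\cW'} \cG(F)$ is defined summand-wise by the iterated colax comparison $F(X_k \ten \cdots \ten X_1) \to F X_k \ten \cdots \ten F X_1$. Its being a monoidal natural transformation satisfying the colax monad morphism axioms reduces to the colax symmetric monoidal axioms for $F$ together with the universal property of coproducts. For (iii), $\cG(\alpha)$ acts as $\alpha$ hom-wise; it is monoidal because $\alpha$ is, and it commutes with the monad morphism 2-cells because $\alpha$ intertwines the iterated colax structures of $F$ and $G$.

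The principal obstacle is combinatorial rather than conceptual: each axiom reduces to a summand-by-summand comparison indexed by paths of varying lengths, whose equality follows from the coherence of $\ten$ in \cW and functoriality of colimits. The essential appeal to \emph{symmetry} in \cW is the interleaving step above, which is also why the construction collapses in the purely lax-monoidal setting alluded to in the remark preceding the lemma.
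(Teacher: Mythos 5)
Your proposal is correct and follows essentially the same route as the paper: the colax comparison $\Gm_\cW(A\ten B)\to\Gm_\cW(A)\ten\Gm_\cW(B)$ is exactly the paper's non-invertible ``rearrangement'' map landing in the equal-length (parallel-paths) summand via the symmetry of \cW, and the colax monad morphism structure on $\cG(F)$ is likewise built hom-wise from $F$'s colax comparisons and sum preservation. The only difference is that you spell out a few details the paper leaves as ``straightforward'' (the counit comparison and the monoidality of $\eta$ and $\mu$), which is fine.
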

\begin{proof}
  We first need to know that $\Gm_\cW$ is a monad in \vcatidc.  It
  certainly preserves small sums.  A colax monoidal structure for it
  should consist of maps
  \[ \Gm_\cW(A\ten B) \to \Gm_\cW(A) \ten \Gm_\cW(B)
  \]
  for \cW-graphs $A$ and $B$.  Both sides have the same set of objects
  $A_0\times B_0$, so we can take this map to be the identity on
  objects, with hom-morphisms 
  \[ \Gm_\cW(A\ten B)((x,y),(x',y')) \to \Gm_\cW(A)(x,x') \ten \Gm_\cW(B)(y,y')
  \]
  given by the ``rearrangement'' map
  \begin{gather*}
    \sum_{(z_1,w_1),\dots,(z_n,w_n)}
    \Big(A(z_{n},x') \otimes B(w_n,y')\Big) \otimes \cdots \otimes
    \Big(A(x,z_1) \otimes B(y,w_1)\Big)\\
    \xymatrix{\ar[d]\\ \ }\\
    \left(\sum_{z_1,\dots,z_m} A(z_m,x')\otimes\dots\otimes A(x,z_1)\right)
    \otimes
    \left(\sum_{w_1,\dots,w_k} B(w_k,y')\otimes\dots\otimes B(y,w_1)\right)
  \end{gather*}
  which maps into the summand where both $m$ and $k$ are equal to
  $n$.  Note that this requires symmetry and associativity of \cW, and
  also that it is not an isomorphism.  It is straightforward to verify
  the necessary axioms; thus $\Gm_\cW$ is a monad in \vcatidc.  (When
  \cW is cartesian, so is $\cG(\cW)$, and so everything is automatic.)

  Next, we show that if $F\colon \cW\to\cW'$ is colax monoidal
  and preserves small sums, then $\cG(F)$ is a colax 
  monad functor from $\Gm_{\cW}$ to $\Gm_{\cW'}$.  That is, we require a
  natural transformation $\cG(F) \circ \Gm_\cW \to \Gm_{\cW'} \circ
  \cG(F)$ satisfying two axioms.  Since all three functors involved
  are the identity on objects, it suffices to give natural maps
  \[ F\Big(\sum_{z_1,\dots,z_n} A(z_n,y) \otimes\dots \otimes A(x,z_1)\Big)
  \too
  \sum_{z_1,\dots,z_n} F(A(z_n,y)) \otimes\dots \otimes F(A(x,z_1))
  \]
  For this we can simply use the colax comparison maps for $F$ along
  with the fact that $F$ preserves sums.  All the axioms are again
  straightforward to verify, as is the final requisite fact that \cG
  takes transformations in \vcatidc to monad 2-cells.
\end{proof}

Thus, for any $\cW\in\vcatidc$, the monad $\Gm_\cW$ on $\cG(\cW)$ is
actually a monad in \vcatidc.  Since 2-functors take monads to monads,
we can then apply \cG again to $\Gm_\cW$ itself, to obtain a new monad
$\cG(\Gm_\cW)$ in $\cMndco(\vcatidc)$ on $(\cG(\cG(\cW)),
\Gm_{\cG(\cW)})$.  As observed in~\cite{street:ftm}, such a monad in a
2-category of monads amounts to a \emph{distributive law} in \vcatidc
in the sense of~\cite{beck:dl}:
\[\lambda\colon \cG(\Gm_\cW) \circ \Gm_{\cG(\cW)} \too 
\Gm_{\cG(\cW)}\circ \cG(\Gm_\cW)
\]
between the monads $\cG(\Gm_\cW)$ and $\Gm_{\cG(\cW)}$ on
$\cG(\cG(\cW))$.

We find it conceptually helpful to write out this distributive law
explicitly, although our proofs will proceed at a high enough level to
make such a description mostly unnecessary.  An object of $\cG(\cG(\cW))$ is
a \textbf{\cW-enriched 2-graph}: it consists of a directed graph $A_1
\toto A_0$, together with an object $A(f,g)$ of \cW for every parallel
pair of edges in $A_1$.  The monad $\cG(\Gm_\cW)$ is the identity on
$A_0$ and $A_1$, with
\[ \cG(\Gm_\cW)(A)(f,g) =
\sum_{h_1,\dots,h_n} A(h_n,g)\ten\cdots \ten A(f,h_1).
\]
The monad $\Gm_{\cG(\cW)}$ acts on $A_1 \toto A_0$ as the free
category monad, with
\[ \Gm_{\cG(\cW)}(A)\big((f_n,\dots,f_1),(g_m,\dots,g_1)\big) =
\begin{cases}
  A(f_n,g_n) \ten \cdots \ten A(f_1,g_1) & \quad \text{if }n=m\\
  \emptyset & \quad \text{if }n\neq m.
\end{cases}
\]
Thus both composites $\cG(\Gm_\cW) \circ \Gm_{\cG(\cW)}$ and
$\Gm_{\cG(\cW)}\circ \cG(\Gm_\cW)$ act as the free category monad on
underlying directed graphs.  For the first, we have
{\small
\begin{multline}\label{eq:rectpdsum}
  \cG(\Gm_\cW)(\Gm_{\cG(\cW)}(A)) \big((f_n,\dots,f_1),(g_n,\dots,g_1)\big) =\\
  \sum_{k,h_{i,j}} \Big(A(h_{n, k},g_n) \ten \cdots\ten A(h_{1, k},g_1)\Big)
  \ten\cdots\ten \Big(A(f_n,h_{n,1}) \ten \cdots\ten A(f_1,h_{1,1})\Big)
\end{multline}}
while for the second, we have
{\small
\begin{multline}\label{eq:pdsum}
  \Gm_{\cG(\cW)}(\cG(\Gm_\cW)(A)) \big((f_n,\dots,f_1),(g_n,\dots,g_1)\big) =\\
  \sum_{k_i,h_{i,j}} \Big( A(h_{n,k_n},g_n) \ten\cdots\ten A(f_n,h_{n,1}) \Big)
  \ten\cdots\ten \Big( A(h_{1,k_1},g_1) \ten\cdots\ten A(f_1,h_{1,1}) \Big).
\end{multline}}

The first sum is over all \emph{rectangular} arrays $(h_{i,j})$ with
$1\le i\le n$ and $1\le j\le k$, while the second sum is over arrays
$(h_{i,j})$ where $1\le i\le n$ and $1\le j\le k_i$, with the bound
$k_i$ possibly depending on $i$.  In other words, the first
corresponds to pasting diagrams of 2-cells in a 2- or 3-category such as the
following:
\[\vcenter{\xymatrix{
    \\ \quad
    \ar@/_9mm/[r]^{}="34" \ar@/_3mm/[r]_{}="33b"^{}="33a"
    \ar@/^3mm/[r]_{}="32b"^{}="32a" \ar@/^9mm/[r]_{}="31"
    \ar@{=>}"31";"32a"
    \ar@{=>}"32b";"33a"
    \ar@{=>}"33b";"34"
    & \quad
    \ar@/_9mm/[r]^{}="24" \ar@/_3mm/[r]_{}="23b"^{}="23a"
    \ar@/^3mm/[r]_{}="22b"^{}="22a" \ar@/^9mm/[r]_{}="21"
    \ar@{=>}"21";"22a"
    \ar@{=>}"22b";"23a"
    \ar@{=>}"23b";"24"
    & \quad
    \ar@/_9mm/[r]^{}="14" \ar@/_3mm/[r]_{}="13b"^{}="13a"
    \ar@/^3mm/[r]_{}="12b"^{}="12a" \ar@/^9mm/[r]_{}="11"
    \ar@{=>}"11";"12a"
    \ar@{=>}"12b";"13a"
    \ar@{=>}"13b";"14"
    & \quad 
    \\ \\
  }}
\]
while the second corresponds to more general diagrams such as the following:
\[\vcenter{\xymatrix{
    \\ \quad
    \ar@/_9mm/[r]^{}="34" \ar@/_3mm/[r]_{}="33b"^{}="33a"
    \ar@/^3mm/[r]_{}="32b"^{}="32a" \ar@/^9mm/[r]_{}="31"
    \ar@{=>}"31";"32a"
    \ar@{=>}"32b";"33a"
    \ar@{=>}"33b";"34"
    & \quad
    \ar@/_3mm/[r]_{}="23b"^{}="23a"
    \ar@/^3mm/[r]_{}="22b"^{}="22a"
    \ar@{=>}"22b";"23a"
    & \ar[r]
    & 
    \ar@/_6mm/[r]^{}="13"
    \ar[r]_{}="12b"^{}="12a" \ar@/^6mm/[r]_{}="11"
    \ar@{=>}"11";"12a"
    \ar@{=>}"12b";"13"
    & \quad 
    \\ \\
  }}
\]
A diagram of the latter form (i.e.\ a 2-cell in $\Gm_{\cG(\cW)}(\cG(\Gm_\cW)(A))$) is called a \textbf{2-dimensional globular pasting diagram (2-pd)} in $A$.
Note that the 3-cells of $\Gm_{\cG(\cW)}(\cG(\Gm_\cW)(A))$ are not general ``3-dimensional globular pasting diagrams'' in $A$ but merely ``morphisms of 2-dimensional ones,'' i.e.\ diagrams such as
\[\vcenter{\xymatrix{
    \\ \quad
    \ar@/_14mm/[r]^{}="34"
    \ar@/_5mm/[r]_{}="33b"^{}="33a"
    \ar@/^5mm/[r]_{}="32b"^{}="32a"
    \ar@/^14mm/[r]_{}="31"
    \ar@/_2.5mm/ "31";"32a" ^{}="311"
    \ar@/^2.5mm/ "31";"32a" _{}="312"
    \ar@/_2.5mm/ "32b";"33a"  ^{}="321"
    \ar@/^2.5mm/ "32b";"33a" _{}="322"
    \ar@/_2.5mm/ "33b";"34" ^{}="331"
    \ar@/^2.5mm/ "33b";"34" _{}="332"
    \ar "311";"312"
    \ar "321";"322"
    \ar "331";"332"
    & \quad
    \ar@/_5mm/[r]_{}="23b"^{}="23a"
    \ar@/^5mm/[r]_{}="22b"^{}="22a"
    \ar@/_2.5mm/"22b";"23a" ^{}="221"
    \ar@/^2.5mm/"22b";"23a" _{}="222"
    \ar "221";"222"
    & 
    \ar[r]
    & 
    \ar@/_9mm/[r]^{}="13"
    \ar[r]_{}="12b"^{}="12a"
    \ar@/^9mm/[r]_{}="11"
    \ar@/_2.5mm/"11";"12a" ^{}="111"
    \ar@/^2.5mm/"11";"12a" _{}="112"
    \ar@/_2.5mm/"12b";"13" ^{}="121"
    \ar@/^2.5mm/"12b";"13" _{}="122"
    \ar "111";"112"
    \ar "121";"122"
    & \quad 
    \\ \\
  }}
\]
where each 2-cell in a 2-pd is replaced by a single 3-cell.

The other difference between~\eqref{eq:rectpdsum} and~\eqref{eq:pdsum} is that the ordering of the factors is different: in~\eqref{eq:rectpdsum} we compose horizontally and then vertically, while in~\eqref{eq:pdsum} we compose vertically and then horizontally.
Since rectangular 2-pds are a special case of general ones, and since \cW is symmetric, there is an obvious map from~\eqref{eq:rectpdsum} to~\eqref{eq:pdsum}, and this is the distributive law \lambda.

By the general theory of distributive laws, we can now conclude that
\begin{enumerate}
\item $\Gm_{\cG(\cW)}$ lifts to a monad $\widetilde{\Gm_{\cG(\cW)}}$ on
  the \cV-category of $\cG(\Gm_\cW)$-algebras, and
\item the composite functor $\Gm_{\cG(\cW)}\circ \cG(\Gm_\cW)$ on
  $\cG(\cG(\cW))$ has the structure of a monad, whose algebras are the
  same as those of $\widetilde{\Gm_{\cG(\cW)}}$.
\end{enumerate}

The description above of $\Gm_{\cG(\cW)}\circ \cG(\Gm_\cW)$ makes it ``obvious'' that when $\cW=\cCat$, its algebras should be strict 3-categories, but for purposes of generalization in \S\ref{sec:enrich-bicat} we prefer to deduce that from a general analysis.
For this we require two more observations.

The first is essentially~\cite[Lemma 2.4]{bcw:algop-enr-ii}.
Recall that for a monad $t\colon A \to A$ in a 2-category \cK, a (strict) \emph{Eilenberg-Moore object (EM-object)} for $t$ is an object $A^t$ together with an isomorphism of categories
\[ \cK(X, A^t) \cong \cK(X,A)^{\cK(X,t)} \]
natural in $X$, where $\cK(X,A)^{\cK(X,t)}$ denotes the usual Eilenberg-Moore category (category of algebras) for the ordinary monad $\cK(X,t)$ on the category $\cK(X,A)$.
Unsurprisingly, EM-objects in \cCat (or, more generally, \cat\cV) are simply ordinary Eilenberg-Moore (\cV-)categories.

\begin{lem}\label{thm:id-em}
  The 2-category \vcatidc admits the construction of Eilenberg-Moore objects, which are preserved by the forgetful 2-functor
  $ \vcatidc \to \CAT\cV $
  and also by the 2-functor
  $ \cG\colon \vcatidc\to\vcatidc $.
\end{lem}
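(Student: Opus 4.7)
The plan is to define $\cW^t$ to be the standard enriched Eilenberg--Moore \cV-category of $t$, equipped with an induced symmetric monoidal structure and small sums, and then to verify the universal property in \vcatidc together with the two preservation statements.

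For the construction, since $t$ is a colax symmetric monoidal \cV-endofunctor of \cW whose multiplication and unit are monoidal, I would endow $\cW^t$ with a symmetric monoidal structure by setting the tensor of $(A,a)$ and $(B,b)$ to have underlying object $A\ten B$ with action
\[
t(A\ten B) \too t(A)\ten t(B) \xrightarrow{a\ten b} A\ten B,
\]
using the colax comparison, and the unit object $I$ of $\cW$ with action given by the colax unit $t(I)\to I$. The hypothesis that $\mu$ and $\eta$ are \emph{monoidal} transformations is exactly what is needed to make these formulas define valid $t$-algebras, and the coherence and symmetry constraints lift from \cW using that the colax structure of $t$ is itself symmetric. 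Because $t$ preserves small sums, sums in $\cW^t$ exist and are created by the forgetful \cV-functor $U\colon \cW^t\to\cW$, and $\ten$-distributivity is then inherited from $\cW$. Thus $\cW^t\in\vcatidc$ with $U$ a strict symmetric monoidal sum-preserving \cV-functor.

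For the universal property, I would show that for any $\cX\in\vcatidc$ the canonical comparison
\[
\vcatidc(\cX,\cW^t) \too \vcatidc(\cX,\cW)^{\vcatidc(\cX,t)}
\]
is an isomorphism of categories. By the enriched EM universal property already available in $\CAT{\cV}$, a \cV-functor $\widehat F\colon \cX\to\cW^t$ corresponds to a \cV-functor $F\colon\cX\to\cW$ together with an action $\alpha\colon tF\to F$. The extra structure on $\widehat F$ in \vcatidc then amounts to the further data of colax symmetric monoidal and sum-preserving structure on $F$ such that $\alpha$ is a monoidal transformation: the colax comparison maps of $\widehat F$ are precisely those of $F$, and these lift to algebra morphisms exactly when $\alpha$ satisfies the monoidal-transformation axiom. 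On 2-cells, monoidal transformations in \vcatidc between two such lifts are the same as monoidal transformations between their underlying functors compatible with the $t$-actions. Preservation by the forgetful 2-functor $\vcatidc\to\CAT{\cV}$ is then immediate, since by construction the underlying \cV-category of $\cW^t$ is the enriched EM-\cV-category.

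Finally, for preservation by \cG I would check that $\cG(\cW^t)\iso \cG(\cW)^{\cG(t)}$ in \vcatidc. Unpacking a $\cG(t)$-algebra structure $\cG(t)(A)\to A$: since $\cG(t)$ is the identity on object sets, the unit axiom forces the underlying map of object sets to be the identity, and the remaining data is exactly a $t$-algebra structure on each hom-object $A(x,y)$ satisfying associativity and unit laws — in other words, a $\cW^t$-graph structure on $A$. The bijection is manifestly natural and compatible with the levelwise monoidal structures and sums on both sides. I expect the main obstacle to lie in the middle step, namely verifying that the colax monoidal structure on the lift $\widehat F$ inherited from $F$ coincides with the one forced by the \vcatidc-universal property of $\cW^t$; this requires carefully juggling three interacting colax structures (those of $t$, $F$, and $tF$) against the monoidal-transformation axiom on $\alpha$.
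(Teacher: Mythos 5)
Your proof is correct, but it takes a genuinely different and more hands-on route than the paper. The paper's proof is almost entirely formal: it quotes Street's description of EM-objects as lax limits, Lack's theorem that for a 2-monad $S$ on a 2-category \cK the forgetful 2-functor $\Alg{S}_c\to\cK$ creates all lax limits, and the observation that (modulo size issues, which are dodged by treating the sums separately via the usual fact that algebras inherit colimits preserved by the monad) \vcatidc is of the form $\Alg{S}_c$ for a 2-monad $S$ on \CAT{\cV}; the EM-object in \vcatidc is thereby created from the one in \CAT{\cV} without ever being written down. You instead construct the lifted structure explicitly---the monoidal structure on $\cW^t$ induced by the colax (opmonoidal) structure of $t$ together with the monoidality of $\mu$ and $\eta$, and sums created by the forgetful functor---and then verify the representing isomorphism $\vcatidc(\cX,\cW^t)\cong\vcatidc(\cX,\cW)^{\vcatidc(\cX,t)}$ by hand. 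What your approach buys is concreteness and immunity from the size caveat the paper must address; what the paper's approach buys is that the ``juggling of three interacting colax structures'' you rightly flag as the delicate step is absorbed into a single citation of a general creation theorem. Your argument for preservation by \cG (the unit axiom forces any $\cG(t)$-algebra structure to be the identity on object sets, hence to be given hom-wise, so that $\cG(\cW)^{\cG(t)}\simeq\cG(\cW^t)$) is essentially identical to the paper's.
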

\begin{proof}
  On the one hand, it is shown in~\cite{street:ftm} that EM-objects can be described as a certain kind of \emph{lax limit}.
  On the other hand, it is proven in~\cite{lack:lim-lax} that for any 2-monad $S$ on a 2-category \cK, the forgetful functor $\Alg{S}_c \to \cK$ creates all lax limits, where $\Alg{S}_c$ denotes the 2-category of strict $S$-algebras and \emph{colax} $S$-morphisms.
  Modulo size considerations (which can be dealt with as in~\cite{dl:lim-smallfr}), there is a 2-monad $S$ on \CAT{\cV} such that $\Alg{S}_c = \vcatidc$.
  Thus, \vcatidc admits EM-objects constructed as in \CAT\cV.
  To avoid size questions, we can apply this argument only to the monoidal structure, and observe separately that the category of algebras for any monad inherits any colimits preserved by the monad (which is a special case of the theorem of~\cite{lack:lim-lax}, but also easy to prove directly).

  It remains to show that \cG preserves EM-objects.
  But for any monad $T$ in \vcatidc, the unit of $\cG(T)$ is bijective on objects, so the algebra structure of any $\cG(T)$-algebra must also be bijective on objects.
  It follows that a $\cG(T)$-algebra
  structure on a \cW-enriched graph is just a $T$-algebra structure on
  each hom-object.  That is to say, a $\cG(T)$-algebra is the same as
  a graph enriched in $T$-algebras, i.e.\ $\cG$ preserves EM-objects.
\end{proof}

This implies two things.  Firstly, since $\Gm_\cW$ is a monad in
\vcatidc, its \cV-category of algebras, namely $\cat{\cW}$, is also a
\ten-distributive symmetric monoidal \cV-category.  Secondly, the
\cV-category of $\cG(\Gm_\cW)$-algebras is equivalent to the
\cV-category $\cG(\cat{\cW})$ of graphs enriched in \cW-categories.

The next observation is essentially~\cite[Corollary
6.11]{bcw:algop-enr-ii}.

\begin{lem}\label{thm:gph-cat}
  The induced monad $\widetilde{\Gm_{\cG(\cW)}}$ on $\cG(\cat{\cW})$ is
  isomorphic to $\Gm_{\cat{\cW}}$.
\end{lem}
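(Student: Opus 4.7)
The plan is to identify $\widetilde{\Gm_{\cG(\cW)}}$ and $\Gm_{\cat\cW}$ through their Eilenberg--Moore adjunctions on $\cG(\cat\cW)$. Since a monad is determined, up to canonical isomorphism, by the resulting EM adjunction, it suffices to exhibit an equivalence of the two \cV-categories of strict algebras that commutes with the forgetful functors down to $\cG(\cat\cW)$; uniqueness of left adjoints then yields an isomorphism of monads.

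First I would observe that, by definition, a strict $\Gm_{\cat\cW}$-algebra is a $\cat\cW$-enriched category, i.e., a \cW-enriched 2-category presented as a category enriched in \cW-categories. On the other side, under the equivalence $\cG(\cat\cW) \simeq \Alg{\cG(\Gm_\cW)}_s$ coming from \autoref{thm:id-em}, a strict $\widetilde{\Gm_{\cG(\cW)}}$-algebra corresponds via Beck's theorem on distributive laws to a strict algebra for the composite monad $\Gm_{\cG(\cW)} \circ \cG(\Gm_\cW)$ on $\cG(\cG(\cW))$. Unpacking, such an algebra comprises a $\cG(\Gm_\cW)$-algebra structure---equivalently, a \cW-category structure (vertical composition of 2-cells) on each hom $A(x,y)$---together with a $\Gm_{\cG(\cW)}$-algebra structure, i.e., a $\cG(\cW)$-enriched category structure on $A$ giving horizontal composition of 1- and 2-cells, with compatibility imposed by $\lambda$. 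By inspection of the formula for $\lambda$ as the ``rearrangement map'' from rectangular to general 2-pds, this compatibility is precisely the middle-four interchange, so the resulting data is again a \cW-enriched 2-category with the given underlying $\cat\cW$-graph. Both identifications are manifestly compatible with the forgetful functors down to $\cG(\cat\cW)$.

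The main obstacle is the last check in the preceding paragraph: verifying that the compatibility encoded by $\lambda$ really is the interchange law. One must chase a generic element representing a $2 \times 2$ rectangular pasting through the two sides of the distributive-law square and confirm that the two resulting composites correspond, respectively, to the horizontal-then-vertical and vertical-then-horizontal evaluations. This is routine but notationally fiddly. The remaining steps---extracting the isomorphism of monads from the equivalence of EM categories via uniqueness of adjoints, in the \cV-enriched setting---are formal and follow standard patterns.
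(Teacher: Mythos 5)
Your proposal is correct in outline but takes a genuinely different route from the paper. The paper never passes through the algebras at all: it computes $\widetilde{\Gm_{\cG(\cW)}}(A)$ explicitly (apply $\Gm_{\cG(\cW)}$ to the underlying \cW-enriched 2-graph, then use $\la$ to equip the result with local \cW-category structures) and observes that this is literally the same formula as $\Gm_{\cat{\cW}}(A)$, because the sums and tensor products in \cat{\cW} out of which $\Gm_{\cat{\cW}}$ is built are created in $\cG(\cW)$, with \cW-category structures coming from the colax monoidal structure of $\Gm_\cW$ --- which is exactly what $\la$ encodes. That direct comparison is short and, crucially, transfers verbatim to the pseudoalgebra version in \S\ref{sec:enrich-bicat}, which is the paper's stated reason for preferring the abstract route over unpacking algebras. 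Your route --- identify both Eilenberg--Moore \cV-categories over $\cG(\cat{\cW})$ and recover the monad isomorphism by uniqueness of left adjoints --- is sound and in effect proves \autoref{thm:alg-gg} first and deduces the lemma from it, reversing the paper's logical order. It buys a concrete understanding of the algebras, at the cost of more bookkeeping. Two points in your sketch need tightening for the argument to close: (a) the $\la$-compatibility square is not only the middle-four interchange; the length-zero summands of $\cG(\Gm_\cW)$ also force horizontal composition to preserve vertical identities, and together these say exactly that the composition maps are \cW-functors, i.e.\ that the $\cG(\cW)$-category structure is a \cat{\cW}-category structure; and (b) since the lemma asserts an isomorphism of \cV-monads, the equivalence of algebra categories must be established as a \cV-equivalence commuting with the forgetful \cV-functors --- so for $\cV=\cCat$ you must match up the 1-cells and the icons on both sides, not just the objects --- before the uniqueness-of-adjoints step applies. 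Neither is a fatal obstacle, but both are where the remaining work lives, and together they make this route longer than the paper's.
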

\begin{proof}
  Let $A$ be a (\cat{\cW})-enriched graph, i.e. a
  $\cG(\Gm_\cW)$-algebra.  Thus it is a \cW-enriched 2-graph, as
  above, together with, for each $x,y\in A_0$, a \cW-category
  structure whose objects are edges $f,g\colon x\to y$ in $A_1$ and
  whose morphism-objects are the $A(f,g)$.

  By definition, $\widetilde{\Gm_{\cG(\cW)}}$ applies $\Gm_{\cG(\cW)}$
  to underlying objects in $\cG(\cG(\cW))$ and equips the result with
  a $\cG(\Gm_\cW)$-algebra structure specified by \lambda.  Thus the
  underlying directed graph of $\widetilde{\Gm_{\cG(\cW)}}(A)$ is the
  free category on $A_1\toto A_0$, and we have
  \[ \widetilde{\Gm_{\cG(\cW)}}(A)\big((f_n,\dots,f_1),(g_m,\dots,g_1)\big) =
  \begin{cases}
    A(f_n,g_n) \ten \cdots \ten A(f_1,g_1) & \quad \text{if }n=m\\
    \emptyset & \quad \text{if }n\neq m.
  \end{cases}
  \]
  with the local \cW-category structure given by
  \begin{multline*}
    \Big(A(g_n,h_n) \ten \cdots \ten A(g_1,h_1)\Big) \ten
    \Big(A(f_n,g_n) \ten \cdots \ten A(f_1,g_1)\Big)\\
    \too[\iso]
    \Big(A(g_n,h_n) \ten A(f_n,g_n)\Big) \ten \cdots\ten
    \Big(A(g_1,h_1) \ten A(f_1,g_1)\Big)\\
    \too A(f_n,h_n) \ten\cdots \ten A(f_1,h_1).
  \end{multline*}
  On the other hand, $\Gm_{\cat{\cW}}$ is built in the same way as
  $\Gm_{\cG(\cW)}$, but using sums and tensor products in \cat{\cW}
  instead of $\cG(\cW)$.  But sums and tensor products in \cat{\cW}
  are created in $\cG(\cW)$, with a \cW-category structure induced
  from the colax monoidal structure of $\Gm_\cW$, and this gives exactly the same structure maps as above.
\end{proof}

It follows that $\widetilde{\Gm_{\cG(\cW)}}$-algebras can be identified
with categories enriched in \cW-categories, i.e.\ with \textbf{\cW-enriched 2-categories}.
We conclude:

\begin{thm}\label{thm:alg-gg}
  If \cW is a \ten-distributive symmetric monoidal \cV-category, then
  there is a \cV-monad $\Gm_{\cG(\cW)}\circ \cG(\Gm_\cW)$ on the
  \cV-category $\cG(\cG(\cW))$, whose \cV-category of algebras
  consists of \cW-enriched 2-categories.\qed
\end{thm}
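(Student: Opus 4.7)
The plan is simply to assemble the machinery already set up in \S\ref{sec:iterated}. First, since by \autoref{thm:g-lift} $\Gm_\cW$ is itself a monad in \vcatidc, and 2-functors preserve monads, applying the 2-functor $\cG$ of \autoref{thm:g-lift} once more produces a monad $\cG(\Gm_\cW)$ in $\cMndco(\vcatidc)$ based at the object $(\cG(\cG(\cW)),\Gm_{\cG(\cW)})$. By Street's identification of monads in $\cMndco(\cK)$ on $(A,t)$ with distributive laws involving $t$, this monad is precisely a distributive law
\[\lambda\colon \cG(\Gm_\cW)\circ \Gm_{\cG(\cW)} \too \Gm_{\cG(\cW)}\circ \cG(\Gm_\cW)\]
in \vcatidc, namely the one described explicitly after \autoref{thm:g-lift}.

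Second, I would invoke Beck's theorem on distributive laws: the composite $\Gm_{\cG(\cW)}\circ \cG(\Gm_\cW)$ inherits the structure of a \cV-monad on $\cG(\cG(\cW))$, and its \cV-category of algebras is isomorphic to the \cV-category of algebras for the lifted monad $\widetilde{\Gm_{\cG(\cW)}}$ on the \cV-category of $\cG(\Gm_\cW)$-algebras. Now \autoref{thm:id-em} tells us that since \cG preserves EM-objects and $\cat{\cW}$ is the EM-object of $\Gm_\cW$, the \cV-category of $\cG(\Gm_\cW)$-algebras is $\cG(\cat{\cW})$; and \autoref{thm:gph-cat} identifies the lifted monad with $\Gm_{\cat{\cW}}$.

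Finally, since by the construction of \S\ref{sec:graphs} the algebras of $\Gm_{\cU}$ on $\cG(\cU)$ are precisely \cU-enriched categories for any \ten-distributive monoidal \cV-category \cU, the specialization $\cU = \cat{\cW}$ shows that the algebras in question are $(\cat{\cW})$-enriched categories, which is the definition of \cW-enriched 2-categories.

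There is no significant obstacle: the theorem is essentially a packaging of Lemmas \ref{thm:g-lift}, \ref{thm:id-em}, and \ref{thm:gph-cat} via Beck's distributive law theorem, and the only thing one needs to check is that the various identifications and lifts are compatible — but this is automatic from the universal properties involved (EM-objects, distributive-law composites, and the definition of \cG on monads). The genuine work has already been done in the preceding lemmas, in particular the verification in \autoref{thm:g-lift} that $\Gm_\cW$ is colax monoidal, and in \autoref{thm:gph-cat} that the $\cG(\Gm_\cW)$-algebra structure transported by $\lambda$ agrees with the structure on tensor products in $\cat\cW$.
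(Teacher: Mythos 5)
Your proposal is correct and follows essentially the same route as the paper: the theorem is indeed just the assembly of Lemma \ref{thm:g-lift} (giving the distributive law $\lambda$ via Street's formalism), Beck's theorem on distributive laws, Lemma \ref{thm:id-em} (identifying the base of the lifted monad as $\cG(\cat{\cW})$ and showing $\cat{\cW}$ is again \ten-distributive), and Lemma \ref{thm:gph-cat} (identifying the lifted monad with $\Gm_{\cat{\cW}}$). This matches the paper's argument step for step, which is why the theorem is stated with an immediate \qed.
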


We will write $T_\cW$ for this monad $\Gm_{\cG(\cW)}\circ
\cG(\Gm_\cW)$.  By its explicit description given above, we see that
it equips its algebras with a direct way to compose any
2-dimensional globular pasting diagram, as we would expect for an
``unbiased'' monadic presentation of (enriched) 2-categories.

\section{Iconic tricategories}
\label{sec:enrich-bicat}

We now specialize again to the case $\cV=\cCat$.  Thus, for any
\ten-distributive sym\-metric monoidal 2-category \cW, we have a 2-monad
$T_\cW$ on \cW-enriched 2-graphs whose strict algebras are
\cW-enriched 2-categories.  In particular, if $\cW=\cCat$ as well,
then strict $T_{\cCat}$-algebras are strict 3-categories.  The
morphisms of $\Alg{T_{\cCat}}_s$ are of course strict 3-functors.
We follow~\cite{gg:ldstr-tricat} in calling its 2-cells \textbf{ico-icons};
they can be identified with ``oplax tritransformations'' whose 1- and 2-morphism components are identities.
(This 2-category $\Alg{T_{\cCat}}_s$ of 3-categories, 3-functors, and ico-icons is the next level of a ``low-dimensional category of higher-dimensional categories.'')

However, we can now also consider pseudo $T_{\cCat}$-algebras.  From
the description of $T_\cW = \Gm_{\cG(\cW)} \circ \cG(\Gm_\cW)$ in the previous section, we see that a
pseudo $T_{\cCat}$-algebra is a \cCat-enriched 2-graph with the
structure of a category on its underlying directed graph, together
with basic operations for composing any 2-pd, which are functorial and satisfy the appropriate laws up to invertible 3-cells.

This looks like some sort of tricategory, but to describe it in a more familiar way, we need to unravel the relationship between pseudoalgebras and distributive laws.
A natural context for this involves pseudomonads.
By a \textbf{pseudomonad} we will mean a strict 2-functor $T$ equipped with pseudo natural transformations $\mu\colon T^2\to T$ and $\eta\colon \mathrm{Id} \to T$ which satisfy the monad laws up to coherent invertible modifications.
(This is not the only possible weakening of the notion of 2-monad, of course---one could require $\mu$ and $\eta$ to be strict, or allow $T$ to be only a pseudofunctor---but it is the most convenient for our purposes.)
For any pseudomonad $T$ we can define the 2-category $\PsAlg{T}$ in a straightforward way.
Similarly, a \textbf{pseudo distributive law} between pseudomonads $T$ and $S$ is a pseudo natural transformation $T S \to S T$ which satisfies the distributive law axioms up to coherent invertible modifications.

Of course, any strict 2-monad is also a pseudomonad, and likewise any strict \cCat-enriched distributive law is a pseudo distributive law.
This applies in particular to our 2-monads $\cG(\Gm_\cW)$ and $\Gm_{\cG(\cW)}$ and our distributive law
\[\lambda\colon \cG(\Gm_\cW) \circ \Gm_{\cG(\cW)} \too 
\Gm_{\cG(\cW)}\circ \cG(\Gm_\cW).
\]
It is shown in~\cite{marmolejo-psmonads,marmolejo-psmonads-ii} that pseudo distributive laws between pseudomonads correspond to liftings to 2-categories of pseudoalgebras, just as in the strict case.
In our situation, this implies that for any \cW,
\begin{enumerate}
\item $\Gm_{\cG(\cW)}$ lifts to a pseudomonad
  $\widetilde{\Gm_{\cG(\cW)}}$ on $\PsAlg{\cG(\Gm_\cW)}$, and
\item the composite functor $T_\cW = \Gm_{\cG(\cW)}\circ \cG(\Gm_\cW)$
  has the structure of a pseudomonad, such that $\PsAlg{T_\cW}$ is
  equivalent to $\PsAlg{\widetilde{\Gm_{\cG(\cW)}}}$.
\end{enumerate}
Moreover, since $\Gm_{\cG(\cW)}$ and $\cG(\Gm_\cW)$ are strict
2-monads and the distributive law \lambda\ is strict, the pseudomonads
$\widetilde{\Gm_{\cG(\cW)}}$ and $T_\cW$ are also strict 2-monads, and
$T_\cW$ is the same 2-monad to which we gave that name in the previous
section.

We now require the analogues of Lemmas \ref{thm:id-em} and \ref{thm:gph-cat} for pseudoalgebras.
The natural context in which to prove these is that of Gray-categories.
Recall from~\cite{gps:tricats} that \cGRAY denotes the category of strict 2-categories and strict 2-functors, equipped with the closed symmetric monoidal structure whose internal-hom $[C,D]$ is the 2-category of strict 2-functors, \emph{pseudo} natural transformations, and modifications from $C$ to $D$.
A \textbf{Gray-category} is a \cGRAY-enriched category, which can be considered as a semi-strict form of tricategory.
The prototypical Gray-category is of course \cGRAY itself, which (as a Gray-category) consists of strict 2-categories, strict 2-functors, pseudonatural transformations, and modifications.

Since the notion of pseudomonad we are using involves strict 2-functors and pseudonatural transformations, it can be defined entirely within the Gray-category \cGRAY.
By mimicking this definition we can define pseudomonads inside any Gray-category (see for instance~\cite{marmolejo-psmonads}).
One can also define \emph{objects of pseudoalgebras}, which generalize $\PsAlg{T}$ in the same way that EM-objects generalize Eilenberg-Moore categories; see~\cite{lack:psmonads}, where these are also exhibited as a certain kind of \cGRAY-weighted limit.

We can now state and prove a pseudo version of \autoref{thm:id-em}.
Since the technology of pseudomonads is less well-developed, for simplicity we now restrict to the case when \cW is cartesian monoidal.
(Of course, our primary interest is in the case $\cW=\cCat$.)
Let \grayidc denote the Gray-category of $\times$-distributive cartesian monoidal 2-categories, 2-functors preserving small sums, pseudonatural transformations, and modifications.
Once we verify that \cG acts on pseudonatural transformations and modifications, we have a Gray-functor $\cG\colon \grayidc \to \grayidc$.

\begin{lem}
  The Gray-category \grayidc has objects of pseudoalgebras, and they are preserved by the forgetful Gray-functor $\grayidc\to\cGRAY$ and by the Gray-functor \cG.
\end{lem}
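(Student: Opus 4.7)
The plan is to mirror the proof of \autoref{thm:id-em}, replacing each strict ingredient by its Gray-enriched analogue. By~\cite{lack:psmonads}, in any Gray-category the object of pseudoalgebras for a pseudomonad can be exhibited as a particular \cGRAY-weighted limit, so the problem reduces to showing that \grayidc admits these weighted limits and that both the forgetful Gray-functor to \cGRAY and the Gray-functor \cG preserve them.

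For existence in \grayidc, the first step is to identify \grayidc with the Gray-category of algebras for an appropriate Gray-monad on \cGRAY, whose algebra structure encodes that of a $\times$-distributive cartesian monoidal 2-category (with the morphisms, 2-cells, and 3-cells dictating that we are remembering pseudonatural transformations and modifications between strict sum-preserving 2-functors). The needed creation-of-limits property then follows from a Gray-enriched analogue of~\cite{lack:lim-lax}. To avoid size issues of the sort handled in~\cite{dl:lim-smallfr}, one can split the argument: handle the monoidal structure via a Gray-monad, and deal with \ten-distributivity and cocompleteness separately by noting that a category of (pseudo)algebras for any sum-preserving (pseudo)monad inherits small sums.

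For preservation by \cG, the key observation, as in \autoref{thm:id-em}, is that for any pseudomonad $T$ in \grayidc, the unit $\eta\colon \mathrm{Id}\to \cG(T)$ is bijective on objects of the underlying graph. Hence, if $a\colon \cG(T)(A)\to A$ is the structure map of a pseudo $\cG(T)$-algebra, the invertible 2-cell $a\circ\eta\cong\mathrm{id}$ forces $a$ to be bijective on objects as well, since invertible 2-cells in $\cG(\cW)$ require their source and target to agree on objects. It then follows that a pseudo $\cG(T)$-algebra structure on a \cW-enriched graph $A$ reduces to a pseudo $T$-algebra structure on each hom-object $A(x,y)$, with analogous reductions for pseudo morphisms, transformations, and modifications. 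Thus the object of pseudo $\cG(T)$-algebras is exactly $\cG(\PsAlg{T})$, giving preservation by \cG.

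The main obstacle is expected to be the first step: establishing the existence of the objects of pseudoalgebras in \grayidc requires either a general result about limits in Gray-categories of pseudoalgebras for Gray-monads (a theory less developed than its 2-categorical counterpart, and involving genuine size subtleties) or a somewhat ad hoc verification. Once existence and preservation by the forgetful functor are in hand, the preservation-by-\cG half is essentially a verbatim translation of the strict argument, hinging only on the bijective-on-objects property of the unit of $\cG(T)$.
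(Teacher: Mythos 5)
Your second half---preservation by \cG via the observation that the unit of $\cG(T)$ is bijective on objects, so that invertibility of the 2-cell $a\circ\eta\cong\mathrm{id}$ forces the structure map to be bijective on objects and hence the whole pseudoalgebra structure to be given locally---is exactly the paper's argument, and it is correct.

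The existence half is where you diverge, and where your proposal as written has a gap that you yourself flag: you want to present \grayidc as the Gray-category of algebras for a Gray-monad on \cGRAY and then invoke a Gray-enriched analogue of~\cite{lack:lim-lax}, but no such analogue is established, and proving one would be a substantial undertaking. The paper needs none of this machinery. Its argument is much more elementary: the object of pseudoalgebras for a pseudomonad $T$ on a 2-category \cK already exists in \cGRAY (it is the 2-category $\PsAlg{T}$, exhibited as a \cGRAY-weighted limit in~\cite{lack:psmonads}), so all that remains is to check that when \cK and $T$ live in \grayidc, the 2-category $\PsAlg{T}$ is again a $\times$-distributive cartesian monoidal 2-category with this structure created by the forgetful 2-functor to \cK. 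That follows from two easily verified facts: $\PsAlg{T}$ inherits finite products from \cK, and it inherits any sums that $T$ preserves (both are special cases of~\cite{ls:limlax}, or can be checked directly). Once the structure is created in \cK, the universal property of $\PsAlg{T}$ holds in \grayidc and is preserved by the forgetful functor to \cGRAY. This is also precisely why the paper restricts to cartesian monoidal \cW at this point: every sum-preserving 2-functor between cartesian monoidal 2-categories is automatically colax symmetric monoidal, so no monoidal structure on the 1-cells of \grayidc needs to be tracked, and the only thing to verify about $\PsAlg{T}$ is the inheritance of products and sums. Your closing remark that ``a somewhat ad hoc verification'' might suffice is the right instinct; the verification is just this inheritance statement, and you should carry it out directly rather than reaching for a general Gray-monadicity theorem.
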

\begin{proof}
  The 2-category of pseudoalgebras for a pseudomonad inherits
  finite products from the base 2-category, along with any sums that
  the pseudomonad preserves.  (The statement about products
  essentially follows from~\cite[2.1]{bkp:2dmonads}, and both
  statements are special cases of the general results
  of~\cite{ls:limlax}; but both are also easy to check directly.)
  Hence if $T$ is a pseudomonad on \cK in \grayidc, then $\PsAlg{T}$ is again
  \times-distributive, with structure created in \cK.
  This implies that \grayidc has objects of pseudoalgebras preserved by its forgetful functor to $\cGRAY$.

  Now since isomorphic maps in $\cG(\cW)$ must be equal on objects, as
  in \autoref{thm:id-em} we conclude that a pseudo $\cG(T)$-algebra
  structure must be given locally.  Thus pseudo $\cG(T)$-algebras are
  just pseudo-$T$-algebra--enriched graphs, i.e.\ \cG preserves objects of pseudoalgebras as well.
\end{proof}

As before, this implies that the 2-category \pscat{\cW} of pseudo
$\Gm_\cW$-algebras is again a $\times$-distributive cartesian monoidal
2-category, and that the 2-category of pseudo $\cG(\Gm_\cW)$-algebras
is equivalent to the 2-category $\cG(\pscat{\cW})$ of graphs enriched
in pseudo \cW-categories.

\begin{lem}
  The 2-monad $\widetilde{\Gm_{\cG(\cW)}}$ on $\cG(\pscat{\cW})$ is
  isomorphic to $\Gm_{\pscat{\cW}}$.
\end{lem}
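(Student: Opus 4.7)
The plan is to mirror the proof of \autoref{thm:gph-cat}, replacing strict algebras with pseudoalgebras throughout. By the preceding lemma, an object of $\cG(\pscat\cW)$ is the same as a pseudo $\cG(\Gm_\cW)$-algebra, namely a \cW-enriched 2-graph $A$ together with, for each $x,y\in A_0$, a pseudo \cW-category structure on the graph whose objects are the edges $f,g\colon x\to y$ of $A_1$ and whose hom-objects are the $A(f,g)$.

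First I would compute $\widetilde{\Gm_{\cG(\cW)}}(A)$ explicitly. Since our \lambda\ is a strict distributive law (and hence also a pseudo one), the theory of pseudo distributive laws shows that $\widetilde{\Gm_{\cG(\cW)}}$ applies $\Gm_{\cG(\cW)}$ to the underlying object in $\cG(\cG(\cW))$ and transports the pseudo $\cG(\Gm_\cW)$-algebra structure across \lambda. As in the strict case, the underlying 2-graph is the free category on $A_1 \toto A_0$ with
\[
\widetilde{\Gm_{\cG(\cW)}}(A)\big((f_n,\dots,f_1),(g_m,\dots,g_1)\big) =
\begin{cases} A(f_n,g_n)\ten\cdots\ten A(f_1,g_1) & n=m, \\ \emptyset & n\neq m, \end{cases}
\]
and the local pseudo \cW-category structure on each hom-object is obtained by applying \lambda\ componentwise, i.e.\ by the rearrangement map that defines \lambda.

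Next I would describe $\Gm_{\pscat\cW}(A)$: its underlying 2-graph is built from sums and tensor products in $\pscat\cW$, which by the preceding lemma are created from those of $\cG(\cW)$, with pseudo $\Gm_\cW$-algebra structure induced from the colax monoidal structure on $\Gm_\cW$ (automatic in the cartesian case). Consequently the underlying 2-graph of $\Gm_{\pscat\cW}(A)$ agrees with that of $\widetilde{\Gm_{\cG(\cW)}}(A)$, and the local pseudo \cW-category structure on each hom-object is likewise assembled from the same colax monoidal comparison maps.

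Finally I would conclude by observing that these two prescriptions coincide by construction: \lambda\ was defined in \S\ref{sec:iterated} as the canonical rearrangement map built from the symmetry of $\cG(\cW)$, which is precisely the colax monoidal comparison of $\Gm_\cW$ that creates the monoidal structure on $\pscat\cW$. Hence both the functor parts and the structure maps (unit and multiplication) of $\widetilde{\Gm_{\cG(\cW)}}$ and $\Gm_{\pscat\cW}$ agree, giving the desired isomorphism of 2-monads. The main obstacle will be to check that the entire pseudo-algebra data — including the invertible coherence cells implicit in each hom-object being a pseudo \cW-category, not just its underlying structure map — is identified on both sides; but because both structures arise from the single piece of data consisting of the colax monoidal structure of $\Gm_\cW$, this reduces, as in the strict case of \autoref{thm:gph-cat}, to a direct componentwise inspection.
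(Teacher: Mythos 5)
Your proposal is correct and takes essentially the same route as the paper, whose entire proof of this lemma is the single sentence ``Just like the proof of \autoref{thm:gph-cat}.'' You have simply spelled out what that mirroring entails --- transporting the pseudo $\cG(\Gm_\cW)$-algebra structure across the (strict, hence pseudo) distributive law $\lambda$ and matching it against the structure created on $\pscat{\cW}$ by the colax monoidal comparison maps --- which is exactly the intended argument.
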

\begin{proof}
  Just like the proof of \autoref{thm:gph-cat}.
\end{proof}

Thus pseudo $\widetilde{\Gm_{\cG(\cW)}}$-algebras can be
identified with pseudo ($\pscat{\cW}$)-categories, i.e.\ we have
$\PsAlg{\widetilde{\Gm_{\cG(\cW)}}} \simeq \pscat{(\pscat{\cW})}$.
Combining this with the facts about distributive laws cited
previously, we have:

\begin{thm}\label{thm:psalg-bb}
  The 2-category $\PsAlg{T_\cW}$ of pseudoalgebras for the 2-monad
  $T_\cW$ on $\cG(\cG(\cW))$ is 2-equivalent to
  $\pscat{(\pscat{\cW})}$.\qed
\end{thm}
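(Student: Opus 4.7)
The plan is simply to chain together the equivalences already established in this section, together with the pseudo-distributive-law machinery just cited. All of the hard work has been packaged into the two preceding lemmas and into Lemma~\ref{thm:bias}, so the proof is really a matter of concatenating them.

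First, I would invoke the results of~\cite{marmolejo-psmonads,marmolejo-psmonads-ii} on pseudo distributive laws, as recalled just above in items (i) and (ii). Since our $\lambda$ is in particular a pseudo distributive law between the pseudomonads $\cG(\Gm_\cW)$ and $\Gm_{\cG(\cW)}$, we get
\[ \PsAlg{T_\cW} \;\simeq\; \PsAlg{\widetilde{\Gm_{\cG(\cW)}}}. \]
Second, I would apply the preceding lemma about \cG preserving objects of pseudoalgebras in \grayidc. This tells us two things: that $\pscat{\cW}$ is again a $\times$-distributive cartesian monoidal 2-category (so that the constructions of \S\ref{sec:iterated} can be reapplied with $\pscat{\cW}$ in place of \cW), and that the base 2-category of $\widetilde{\Gm_{\cG(\cW)}}$ is 2-equivalent to $\cG(\pscat{\cW})$.

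Third, I would apply the second lemma above, which identifies the induced 2-monad $\widetilde{\Gm_{\cG(\cW)}}$ on $\cG(\pscat{\cW})$ with $\Gm_{\pscat{\cW}}$. Thus
\[ \PsAlg{\widetilde{\Gm_{\cG(\cW)}}} \;\simeq\; \PsAlg{\Gm_{\pscat{\cW}}}. \]
Finally, I would apply Lemma~\ref{thm:bias} to the base 2-category $\pscat{\cW}$, yielding
\[ \PsAlg{\Gm_{\pscat{\cW}}} \;\simeq\; \pscat{(\pscat{\cW})}, \]
and composing all of these equivalences gives the desired conclusion.

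The only potential obstacle is a bookkeeping one: making sure that the hypotheses of Lemma~\ref{thm:bias} and of the constructions of \S\ref{sec:iterated} continue to be satisfied when \cW is replaced by $\pscat{\cW}$. But this is exactly what the first of the two preceding lemmas was designed to deliver, so there is nothing further to check. No new calculations with explicit pasting diagrams are required; everything is handled by the abstract formalism of pseudo distributive laws and objects of pseudoalgebras.
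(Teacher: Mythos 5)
Your proposal is correct and follows exactly the route the paper intends: the theorem is stated with a \qed precisely because it is the concatenation of the pseudo-distributive-law correspondence $\PsAlg{T_\cW}\simeq\PsAlg{\widetilde{\Gm_{\cG(\cW)}}}$, the two preceding lemmas identifying the base as $\cG(\pscat{\cW})$ and the lifted monad as $\Gm_{\pscat{\cW}}$, and the identification of pseudo $\Gm_{\pscat{\cW}}$-algebras with $\pscat{(\pscat{\cW})}$ via Lemma~\ref{thm:bias}. Your closing remark about the bookkeeping (that $\pscat{\cW}$ is again $\times$-distributive, so the constructions can be reapplied) is exactly the point the first lemma is there to supply.
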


In particular, we have $\PsAlg{T_{\cCat}}\simeq \pscat{\cBicat}$.
(Recall that for us, \cBicat denotes the 2-category of bicategories, pseudofunctors, and icons.)
Explicitly, a (biased) pseudo \cBicat-category $A$ consists of:
\begin{enumerate}
\item A set of objects.
\item For each pair of objects $x,y$, a bicategory $A(x,y)$.
\item For each $x$, a pseudofunctor $1\to A(x,y)$.
\item For each $x,y,z$, a pseudofunctor $A(y,z)\times A(x,y) \to
  A(x,z)$.
\item For each $x,y,z,w$, an invertible icon
  \[\vcenter{\xymatrix{A(z,w)\times A(y,z)\times A(x,y) \ar[r]\ar[d]
      \drtwocell\omit{\iso}&
      A(z,w)\times A(x,z) \ar[d]\\
      A(y,w)\times A(x,y)\ar[r] & A(x,w) }}
  \]
\item For each $x,y$, invertible icons
  \[\vcenter{\xymatrix{
      &  \mathllap{A(y,y)}\times \mathrlap{A(x,y)} \ar[dr]
      &&
      &  \mathllap{A(x,y)}\times \mathrlap{A(x,x)} \ar[dr]\\
      A(x,y) \ar[ur] \ar@{=}[rr]
      \rrlowertwocell\omit{<-3>\iso}
      && A(x,y)
      &
      A(x,y) \ar[ur] \ar@{=}[rr]
      \rrlowertwocell\omit{<-3>\iso}
      && A(x,y)
    }}
  \]
\item These icons satisfy the pentagon and unit axioms for a
  bicategory.
\end{enumerate}
Comparing this to the definition of a tricategory
from~\cite{gps:tricats}, we see that the pseudonatural equivalences
for associativity and units have been replaced by invertible icons,
and the modifications $\pi$, $\mu$, $\lambda$, and $\rho$ have been
replaced by axioms.  However, invertible icons can be identified with
pseudonatural transformations whose 1-cell components are identities
(by composing with unit constraints, if necessary).  Under this
translation, the assertion that these icons satisfy the bicategory
axioms translates to the assertion that we have modifications $\pi$,
$\mu$, $\lambda$, and $\rho$ whose components are constraint 2-cells.
By coherence for bicategories, these constraints are unique, and
satisfy any axiom one might ask them to, including in particular the
tricategory axioms.  This suggests the following definition and
proposition.

\begin{defn}
  A tricategory is \textbf{iconic} if the 1-cell components of its
  associativity and unit constraints are identities, and the
  components of its modifications $\pi$, $\mu$, $\lambda$, and $\rho$
  are the uniquely specified constraint cells.
\end{defn}

\begin{prop}
  To give a \cCat-enriched 2-graph the structure of a pseudo
  \cBicat-category (i.e.\ a pseudo $T_{\cCat}$-algebra) is the same
  as to give it the structure of an iconic tricategory.  Moreover,
  under this equivalence, strict $T_{\cCat}$-algebras correspond
  precisely to strict 3-categories.\qed
\end{prop}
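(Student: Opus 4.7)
The plan is to read the proposition directly off \autoref{thm:psalg-bb}, which already identifies $\PsAlg{T_{\cCat}}$ with $\pscat{\cBicat}$ and gives the explicit seven-item description of a pseudo \cBicat-category. It therefore suffices to show that, on a fixed \cCat-enriched 2-graph, the data (i)--(vii) is equivalent to that of an iconic tricategory, and that in the strict case both notions reduce to strict 3-categories.

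First I would pass from a pseudo \cBicat-category $A$ to an iconic tricategory. Items (i)--(iv) immediately supply the objects, hom-bicategories, unit pseudofunctors, and horizontal composition pseudofunctors that any tricategory requires. For the associativity and unit pseudonatural equivalences, I would invoke the fact (recalled just before the proposition, and originating in~\cite{lack:icons}) that invertible icons between parallel pseudofunctors of bicategories correspond exactly to pseudonatural equivalences whose 1-cell components are identities. The invertible icons in items (v) and (vi) are then reinterpreted as associator and unitor data of a tricategory whose 1-cell components are identities, which is the defining condition of \emph{iconic}.

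Next I would produce the modifications $\pi$, $\mu$, $\lambda$, $\rho$. Because the associators and unitors have identity 1-cell components, each component of these modifications must be an invertible 2-cell in a hom-bicategory between two parallel pastings of unit constraints. I would take each such component to be the canonical constraint cell supplied by coherence for bicategories. Reading the bicategory pentagon and unit axioms in item (vii) through the icon/pseudonatural-transformation dictionary then unwinds precisely to the statement that these chosen constraint cells fit together into genuine modifications; and the three tricategory coherence axioms hold automatically thereafter, since by bicategory coherence any two parallel pastings of constraint 2-cells coincide.

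The reverse passage is essentially forced: in an iconic tricategory, the associator and unitor equivalences have identity 1-cell components and hence correspond to invertible icons, and the components of $\pi$, $\mu$, $\lambda$, $\rho$ are constraint cells that automatically validate item (vii). The two constructions are manifestly mutually inverse and compatible with morphisms and 2-cells. For the strict assertion, if all the icons in items (v) and (vi) are identities and the hom-bicategories are strict 2-categories, then $A$ is literally a strict $T_{\cCat}$-algebra, which by \autoref{thm:alg-gg} is a \cCat-enriched 2-category, i.e.\ a strict 3-category. I expect the main obstacle to be the patient book-keeping verification that the bicategory pentagon and unit axioms for icons unwind precisely to the existence of modifications $\pi,\mu,\lambda,\rho$ with constraint components; once this is done, coherence for bicategories disposes of the remaining tricategory axioms uniformly.
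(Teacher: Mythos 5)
Your proposal is correct and follows essentially the same route as the paper, which proves this proposition by the discussion immediately preceding it: identify the invertible icons of items (v)--(vi) with pseudonatural transformations having identity 1-cell components, observe that the bicategory axioms of item (vii) translate into the existence of modifications $\pi$, $\mu$, $\lambda$, $\rho$ with constraint-cell components, and invoke coherence for bicategories to dispose of the tricategory axioms. Your treatment of the strict case via \autoref{thm:alg-gg} likewise matches the paper's.
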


Thus, the 2-category $\PsAlg{T_{\cCat}}$ consists of iconic tricategories, ``iconic functors,'' and ico-icons.

\begin{rmk}
  In~\cite{gg:ldstr-tricat}, Garner and Gurski constructed a \emph{bicategory} whose objects and morphisms are arbitrary tricategories, arbitrary lax functors between them, and an appropriate sort of ico-icon.
  The objects and morphisms of $\PsAlg{T_{\cCat}}$ are rather more restricted, but one can construct a functor from $\PsAlg{T_{\cCat}}$ to their bicategory.
\end{rmk}

\begin{rmk}
  Every Gray-category is iconic when regarded as a tricategory, since composition of 1-cells in a Gray-category is strictly associative and unital.
  In particular, since every tricategory is triequivalent to a Gray-category, every tricategory is triequivalent to an iconic one.
\end{rmk}

We can now prove the main theorem in two different ways.

\begin{thm}\label{thm:main}
  Not every pseudo $T_{\cCat}$-algebra is equivalent to a strict one.
\end{thm}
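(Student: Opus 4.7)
The plan is to exploit the identification $\PsAlg{T_{\cCat}} \simeq \pscat{\cBicat}$ and the preceding proposition, which recasts pseudo $T_{\cCat}$-algebras as iconic tricategories and strict $T_{\cCat}$-algebras as strict 3-categories. Under this dictionary the theorem becomes the assertion that some iconic tricategory is not equivalent, in $\PsAlg{T_{\cCat}}$, to any strict 3-category. I would pursue two complementary routes.

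The first, conceptual, route begins from the remark immediately above: every Gray-category, viewed as a tricategory, is iconic, since composition of 1-cells in a Gray-category is strictly associative and unital and the tricategorical modifications $\pi,\mu,\la,\rho$ are then forced to be the unique coherence cells. Combined with the Gordon--Power--Street coherence theorem that every tricategory is triequivalent to a Gray-category, this shows that every tricategory is triequivalent to an iconic one. Were every pseudo $T_{\cCat}$-algebra equivalent in $\PsAlg{T_{\cCat}}$ to a strict one, it would follow by transitivity that every tricategory is triequivalent to a strict 3-category, contradicting the well-known fact that this fails for tricategories in general.

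A second, more explicit route restricts attention to iconic tricategories with a single object and a single 1-cell. Unpacking the definition shows that such a structure is precisely a braided monoidal category, the braiding arising via the Eckmann--Hilton argument from the interchange of the two composition operations on 2-cells. Under this restriction, the doubly-degenerate strict 3-categories become exactly the strictly symmetric strict monoidal categories, and an equivalence in $\PsAlg{T_{\cCat}}$ between two such objects restricts to a braided monoidal equivalence. Since not every braided monoidal category is braided-equivalent to a strictly symmetric one, any nontrivially braided monoidal category supplies an iconic tricategory not equivalent to any strict 3-category.

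The step I expect to require the most care, in either route, is the comparison between equivalence in the 2-category $\PsAlg{T_{\cCat}}$ and the appropriate higher-dimensional notion of equivalence---triequivalence in the first route, braided monoidal equivalence in the second. Concretely, one must unpack an internal equivalence in $\PsAlg{T_{\cCat}}$, consisting of a pair of iconic functors related by invertible ico-icons as unit and counit, and verify that after translation through the preceding proposition these data yield the stronger categorical equivalence needed to invoke the known obstructions. Once this translation is in hand, each route concludes immediately by contradicting a classical non-strictification result.
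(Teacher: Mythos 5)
Your proposal is correct and matches the paper, which in fact gives both of your routes as its two proofs: the first via Gray-categories being iconic and the failure of strictification for tricategories, the second via doubly-degenerate objects and non-symmetric braided monoidal categories. The delicate point you flag---translating an internal equivalence in $\PsAlg{T_{\cCat}}$ into a triequivalence (respectively a braided monoidal equivalence)---is exactly the step the paper handles by observing that such equivalences are bijective on 0- and 1-cells and locally locally equivalences.
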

\begin{proof}[First proof]
  The same arguments as for pseudo $T_{\cCat}$-algebras show that any
  pseudo $T_{\cCat}$-morphism induces an ``iconic'' functor of
  tricategories (one whose constraints have identity 1-cell components and whose higher constraints are unique bicategory coherence data).
  Moreover, any equivalence in $\PsAlg{T_{\cCat}}$ is
  bijective on 0-cells and 1-cells and locally locally an equivalence (i.e.\ an equivalence on hom-categories of hom-bicategories),
  hence induces a triequivalence of iconic tricategories.
  But any Gray-category is an iconic
  tricategory, hence arises from a pseudo $T_{\cCat}$-algebra, and we
  know that not every Gray-category is triequivalent to a strict
  3-category.  Therefore, not every pseudo $T_{\cCat}$-algebra can be
  equivalent to a strict one.
\end{proof}

We can also give a proof not using any tricategories, by restricting
to doubly-degenerate objects (those with exactly one 0-cell and one
1-cell).

\begin{proof}[Second proof]
  Since equivalences in $\cG(\cG(\cCat))$ are bijective on 0- and
  1-cells, doubly-degenerate objects are closed under equivalences.
  By \autoref{thm:psalg-bb} and \autoref{thm:psmon}, the 2-category of
  doubly-degenerate pseudo $T_{\cCat}$-algebras is equivalent to the
  2-category $\cPsmon(\cPsmon(\cCat))$, which
  by~\cite[\S5]{js:brd-tensor} is equivalent to the 2-category of
  braided monoidal categories.  However, since strict
  $T_{\cCat}$-algebras are strict 3-categories, doubly-degenerate ones
  can be identified with strictly-symmetric strict monoidal
  categories.  Thus, any non-symmetric braided monoidal category (such
  as, for example, the braid category) induces a pseudo
  $T_{\cCat}$-algebra that is not equivalent to a strict one.
\end{proof}

\begin{rmk}
  Note that $\cG(\cG(\cCat))$ is locally finitely presentable and
  $T_{\cCat}$ is finitary.  Thus, the 2-monad $T_{\cCat}$ is otherwise
  as well-behaved as one could wish, but it still violates the
  strictification theorem.
\end{rmk}

\begin{rmk}
  Given the second proof of \autoref{thm:main}, one might wonder whether the introduction of pseudo enriched categories was really necessary, or whether pseudomonoids would suffice.
  However, in order for $T_\cW$ to be a 2-monad rather than a pseudomonad, we needed a \emph{strict} distributive law \lambda.
  This, in turn, requires the fact that $\Gm_\cW$ preserves sums, which is not true of the free monoid monad.
\end{rmk}

\begin{rmk}
  It seems that iconic tricategories may be of independent interest,
  since they are more general than Gray-categories, yet still have a
  purely 2-categorical description as pseudo \cBicat-categories or
  pseudo $T_{\cCat}$-algebras.  Moreover, many naturally occurring
  tricategories seem to be iconic, including even the ``prototypical''
  tricategory of bicategories, pseudofunctors, pseudonatural
  transformations, and modifications.
\end{rmk}

\begin{rmk}
  In a sense, 2-monads such as $\Gm_{\cCat}$ and $T_{\cCat}$ bring together two distinct threads within coherence theory: the 2-categorical and the higher-dimensional.
  It would be interesting to consider whether such a synthesis can also yield positive results.
  Power already observed in~\cite{power:coherence} that his general coherence theorem implies strictification for (unbiased) bicategories.
  One dimension up, there is a 2-monad on \cCat-enriched 2-graphs whose strict algebras are Gray-categories, and whose pseudoalgebras are again essentially the same as iconic tricategories.
  I do not know whether 2-categorical methods could be applied to this 2-monad to prove that any iconic tricategory is equivalent to a Gray-category.

  Alternatively, we could consider higher-dimensional monads: there is a Gray-monad on \cGRAY-enriched graphs whose strict algebras are Gray-categories, and whose pseudoalgebras (in the 3-categorical sense of~\cite{power:3dmonads}) are a type of unbiased ``cubical'' tricategory.
  Thus, if the theorems of~\cite{power:coherence,lack:codescent-coh} can be extended to Gray-monads, they would imply part of the coherence theorem for tricategories.
\end{rmk}

\bibliographystyle{alpha}
\bibliography{all,shulman}

\end{document}